\documentclass[11pt,twoside, leqno]{article}

\usepackage{amsthm,color}
\usepackage{amssymb}
\usepackage{amsmath}
\usepackage{mathrsfs}
\usepackage{txfonts}
\usepackage{graphics}
\usepackage[Symbol]{upgreek}

\allowdisplaybreaks \pagestyle{myheadings}
\pagestyle{myheadings}\markboth{\sc Hamilton's Gradient Estimates} {\sc
 Hamilton's Gradient Estimates}

\textwidth=15cm \textheight=21.0cm \oddsidemargin 0.45cm
\evensidemargin 0.45cm

\parindent=13pt

%
%
%

\newcommand{\ls}{\leqslant}
\newcommand{\gs}{\geqslant}

\newcommand{\ip}[2]{\langle{#1},{#2}\rangle}

\def\rr{{\mathbb R}}

\def\supp{{\mathop\mathrm{\,supp\,}}}

\def\loc{{\mathop\mathrm{\,loc\,}}}
\def\Lip{{\mathop\mathrm{\,Lip\,}}}

\def\ez{\epsilon}

\def\gz{{\gamma}}

\def\sz{\sigma}

\def\r{\right}
\def\lf{\left}

\def\la{\langle}
\def\ra{\rangle}

\newtheorem{thm}{Theorem}[section]
\newtheorem{lem}{Lemma}[section]

\newtheorem{rem}{Remark}[section]

\newtheorem{defn}{Definition}[section]

\newtheorem{ques}{Question}[section]

\numberwithin{equation}{section}

\begin{document}
\arraycolsep=1pt

\title{\bf\Large Hamilton's Gradient Estimates and A Monotonicity Formula \\ for Heat Flows on Metric Measure Spaces
\footnotetext{\hspace{-0.35cm}
  2010 \it{Mathematics Subject Classification}. 53C23, 35K05
\endgraf
{\it Key words and phrases}. Metric measure space, Curvature-dimension condition, Harmilton's gradient estimates, Entropy.
\endgraf
R.Jiang is partially supported by NSFC (No. 11301029);  H.Zhang is partially supported by NSFC (No. 11201492) and the Fundamental Research Funds for the Central Universities.
}}
\author{Renjin Jiang \& Huichun Zhang}
\date{}

\maketitle

\begin{center}
\begin{minipage}{11cm}\small
{\noindent{\bf Abstract}.   In this paper, we extend the
Hamilton's gradient estimates \cite{har93} and a monotonicity formula
of entropy \cite{ni04} for heat flows from smooth Riemannian
manifolds to (non-smooth) metric measure spaces with appropriate
Riemannian curvature-dimension condition.
}\end{minipage}
\end{center}

\section{Introduction}
\hskip\parindent
In their fundamental work \cite{ly86}, Li and Yau discovered a gradient estimate for positive solutions of the heat equation
\begin{equation}\label{eq1.1}
\partial_tu=\Delta u
\end{equation}
on a smooth Riemannian manifold with Ricci curvature bounded from below. Later on,  Hamilton in \cite{har93} used the similar method to establish the following gradient estimates.
\begin{thm}[Hamiton \cite{har93}] Let $(M,g)$ be an $n$-dimensional compact Riemannian
manifold with Ricci curvature $R_{ij}\gs -Kg_{ij}$ for some $K\gs 0$ and
$\partial M=\varnothing$. If $u(x,t)$ is a positive solution of the heat equation with $0<u\ls M$. then
\begin{equation*}
t|\nabla \log u|^2\ls (1+2Kt)\cdot\log(M/u).
\end{equation*}
\end{thm}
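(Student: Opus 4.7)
The plan is to reduce the inequality to a parabolic differential inequality for an auxiliary quantity and then invoke the parabolic maximum principle. First I would set $f := \log(M/u)$, which is nonnegative because $0 < u \ls M$; a direct computation from $\partial_t u = \Delta u$ gives $(\partial_t - \Delta) f = -|\nabla f|^2$. The target inequality is then equivalent to $H \ls 0$ on $M \times [0,\infty)$, where
$$H := t|\nabla f|^2 - (1+2Kt)\, f.$$
At $t=0$ we have $H = -f \ls 0$, so it remains to control the evolution of $H$.

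The next step is to compute $(\partial_t - \Delta)H$. The spatial part is handled via the Bochner formula
$$\Delta|\nabla f|^2 = 2|\nabla^2 f|^2 + 2\langle \nabla f,\nabla \Delta f\rangle + 2\,\mathrm{Ric}(\nabla f,\nabla f),$$
and the Ricci lower bound $\mathrm{Ric}\gs -K$ turns the curvature term into a harmless $-2K|\nabla f|^2$ contribution. The nonlinear source $-|\nabla f|^2$ in the equation for $f$ produces an additional term $-2\langle \nabla f,\nabla|\nabla f|^2\rangle$. I would then rewrite this drift using the identity $t\langle \nabla f,\nabla|\nabla f|^2\rangle = \langle \nabla f,\nabla H\rangle + (1+2Kt)|\nabla f|^2$, which is precisely where the choice of coefficient $(1+2Kt)$ pays off: the $|\nabla f|^2$ contributions from Bochner, from $\partial_t(t|\nabla f|^2)$, and from this drift identity all cancel, leaving
$$(\partial_t - \Delta)H + 2\langle \nabla f,\nabla H\rangle \ls -2t|\nabla^2 f|^2 - 2K f \ls 0,$$
since $f\gs 0$ and $t\gs 0$.

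Finally, I would close the argument by the parabolic maximum principle. Because $M$ is compact and $\partial M=\varnothing$, $H$ attains its maximum on $M\times[0,T]$ either at $t=0$, where $H\ls 0$, or at an interior point where $\partial_t H\gs 0$, $\nabla H=0$, and $\Delta H\ls 0$; the differential inequality above forces $H\ls 0$ at any such interior maximum as well. Since $T$ is arbitrary, this yields $t|\nabla f|^2 \ls (1+2Kt)f$ globally, which is exactly the desired estimate after rewriting $|\nabla f|^2 = |\nabla\log u|^2$ and $f = \log(M/u)$. I expect the only genuinely non-routine step to be the selection of the auxiliary quantity $H$: the particular combination $t|\nabla f|^2 - (1+2Kt)f$ is what makes the Bochner-plus-drift bookkeeping collapse cleanly, and without this ansatz the cancellations in the second step would not occur.
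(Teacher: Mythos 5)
Your computation is correct and yields exactly Hamilton's constant: with $f=\log(M/u)$ one has $(\partial_t-\Delta)f=-|\nabla f|^2$, and your ansatz $H=t|\nabla f|^2-(1+2Kt)f$ together with Bochner and the drift identity does give $(\partial_t-\Delta)H+2\langle\nabla f,\nabla H\rangle\ls -2t|\nabla^2 f|^2-2Kf\ls 0$, so the result follows from the parabolic maximum principle on a closed manifold. Note, however, that the paper does not prove Theorem 1.1 at all (it is quoted from Hamilton); its own argument is for the $RCD^*(K,\infty)$ analogue, Theorem 1.2, and there the route is genuinely different from yours: instead of $\log u$-level quantities with a drift term, Lemmas 3.1--3.2 work with $P_\epsilon=\varphi(t)\frac{|\nabla u|^2}{u+\epsilon}+(u+\epsilon)\log(u+\epsilon)$, $\varphi(t)=t/(1-2Kt)$, which is shown (via Savar\'e's self-improved Bochner inequality, with the singular part of $\Delta^*$ nonnegative) to be a weak subsolution of the \emph{pure} heat equation; one then concludes by an integral Grigor'yan-type maximum principle (Lemma 3.3) rather than a pointwise one. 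That choice is what makes the argument survive in a non-smooth, non-compact space where your pointwise maximum principle with the drift $2\langle\nabla f,\nabla H\rangle$ and the evaluation at a maximum point are unavailable; conversely, your version is shorter and cleaner in the smooth compact setting of Theorem 1.1. One small repair to your last step: at an interior space-time maximum the inequality only yields $0\ls\partial_tH-\Delta H+2\langle\nabla f,\nabla H\rangle\ls 0$, which does not by itself force $H\ls 0$ there; you should either invoke the parabolic maximum principle for subsolutions of $\partial_t-\Delta+2\nabla f\cdot\nabla$ (legitimate, since there is no zero-order term) or run the standard perturbation with $H-\epsilon t$ to get a contradiction and then let $\epsilon\to 0$. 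With that routine adjustment the proof is complete.
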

\noindent Such type gradient estimates are called as Li-Yau-Hamilton (LYH, for short) inequalities afterwards.
Recently, Kotschwar \cite{ko07} extended
Theorem 1.1 to  complete noncompact manifolds.

The LYH inequality is of the basic tools, that has been  widely used in geometric analysis.
For instance, the LYH inequalities imply the classical Harnack inequalities for heat equations, by integrating the LYH type gradient estimates along space-time curves.
 The research of Li-Yau-Hamilton estimates for heat equation
 (or other geometric evolution equations) on smooth Riemannian manifolds
 has a long history. For an overview, the reader is referred to Chapter 4 in
 book \cite{sy94} and \cite{sz06,lx11,wu13,q12,bbg14,bq00}, and references therein. Very recently,
 Bakry-Bolley-Gentil \cite{bbg14} have established an optimal global Li-Yau
on smooth Markov semi-group under a curvature-dimension condition.

Li-Yau's gradient estimates have been extended from smooth manifolds to compact
Alexandrov spaces in \cite{qzz13}, and recently to general metric measure spaces
with the \emph{Riemannian curvature-dimension condition $RCD^\ast(K,N)$}, by
Garofalo-Mondino \cite{gam14} (for the case
$\mu(X)<\infty$) and \cite{jiang14-2} (for the case $\mu(X)=\infty$). Gradient estimates for harmonic functions on metric measure spaces have been studied in \cite{hkx13,jiang14}.
We refer the readers to \cite{agmr,ags1,ags3,ams13,ams13-1,eks13} for recent developments for {Riemannian} curvature
dimension conditions $RCD^\ast(K,N)$,  and  \cite{lv09,stm4,stm5} for curvature
dimension conditions $CD(K,N)$, on metric measure spaces; see Section 2 below.

One of our main aim of this paper is to consider the LYH inequality for heat equations on non-smooth metric measure spaces $(X,d,\mu)$.
 Precisely, our first main result is the following:
\begin{thm}\label{main}
 Let $(X,d,\mu)$ be a proper metric measure space satisfying $RCD^*(K,\infty)$, where $K\ls 0$. Let $u(x,t)$ be a positive
 solution of the heat equation on $X\times [0,\infty)$ with initial value $u(x,0)=u_0(x)\in \cup_{1\ls q<\infty} L^q(X)$. Suppose that there exists a positive constant $M$ such that $u_0(x)\ls M$  for almost every $x\in X$.
 Then
\begin{equation}\label{eq1.2}
t|\nabla \log u|^2\ls (1-2Kt)\cdot\log(M/u)
\end{equation}
 for almost every $(x,t)$ in $X\times [0,\infty)$.
 \end{thm}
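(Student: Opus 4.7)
The plan is to adapt Hamilton's classical maximum-principle argument to the $RCD^*(K,\infty)$ setting, using the regularizing properties of the heat semigroup together with the weak Bochner inequality.

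First I would record the regularity of the solution. Since the heat semigroup on an $RCD^*(K,\infty)$ space is $L^\infty$-contractive and strictly positivity-improving, the hypothesis $0\ls u_0\ls M$ yields $0<u(t,\cdot)\ls M$ for almost every $x$ and every $t>0$. The Bakry--\'Emery gradient estimate places $u(t,\cdot)$ in the Lipschitz class and in $D(\Delta)$, and the corresponding test-function algebra is the one on which the weak Bochner inequality of Ambrosio--Gigli--Savar\'e and Erbar--Kuwada--Sturm can be invoked. Setting $w:=\log(M/u)\gs 0$, the chain rule gives
\[
(\partial_t-\Delta)\,w=-|\nabla w|^2
\]
in the almost-everywhere sense.

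Next I would carry out Hamilton's calculation on the auxiliary function
\[
F(t,x):=t\,|\nabla w(t,x)|^2-(1-2Kt)\,w(t,x).
\]
Applying the Bochner identity $\tfrac12\Delta|\nabla w|^2=\Gamma_2(w)+\langle\nabla w,\nabla\Delta w\rangle$ together with the $RCD^*(K,\infty)$ lower bound $\Gamma_2(w)\gs K|\nabla w|^2$, a direct computation yields
\[
(\partial_t-\Delta)F\ls 2(1-2Kt)\,|\nabla w|^2-2t\,\langle\nabla w,\nabla|\nabla w|^2\rangle+2Kw.
\]
At a would-be interior maximum of $F$, the stationarity $\nabla F=0$ forces $\nabla|\nabla w|^2=\tfrac{1-2Kt}{t}\nabla w$, so the right-hand side collapses to $2Kw\ls 0$, which together with $K\ls 0$ and $w\gs 0$ contradicts the maximum conditions $\partial_tF\gs 0$ and $\Delta F\ls 0$ (a standard $-\delta t$ barrier trick handles the borderline case $K=0$). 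Since $F(0,\cdot)=-w\ls 0$, a weak parabolic maximum principle then delivers $F\ls 0$ on $X\times(0,\infty)$, which is exactly \eqref{eq1.2}.

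The principal obstacle is to convert this formal pointwise argument into a rigorous one on the non-smooth space: the Bochner inequality is only available in its integrated/measure-valued form, and $\nabla|\nabla w|^2$ is a second-order object not a priori well-defined. I would handle these points by testing the differential inequality against the good cutoff functions of Mondino--Naber on a large geodesic ball $B_R(x_0)$, using the measure-valued Hessian theory of Gigli to read the chain-rule term $\langle\nabla w,\nabla|\nabla w|^2\rangle$ weakly, and executing an integrated version of the maximum principle on each ball before sending $R\to\infty$ via the exhaustion available on $RCD^*(K,\infty)$. A preliminary mollification $u_0\mapsto P_\varepsilon u_0$ followed by a limit $\varepsilon\to 0$ upgrades the conclusion to the full $L^q$-initial-data case stated in the theorem.
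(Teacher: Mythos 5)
Your formal computation is Hamilton's classical one, but its decisive step---discarding the term $-2t\langle\nabla w,\nabla|\nabla w|^2\rangle$ because $\nabla F=0$ at an interior space-time maximum---is an intrinsically pointwise, second-order argument that has no counterpart in the $RCD^*(K,\infty)$ setting: here $\Delta F$ is only a Radon measure, $|\nabla F|$ is only defined $\mu$-a.e., and on a proper, possibly non-compact space with merely $L^q$ initial data an interior maximum need not even exist. Your proposed remedy (Mondino--Naber cutoffs plus ``an integrated version of the maximum principle'') does not repair this: once you integrate against cutoffs you can no longer evaluate anything at a maximum point, and the term $-2t\langle\nabla w,\nabla|\nabla w|^2\rangle$ has no sign. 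One can rewrite it as a drift term $-2\langle\nabla w,\nabla F\rangle$, but then you would need a weak parabolic maximum principle with drift $\nabla\log u$, which is not globally bounded ($u$ has no positive lower bound when $X$ is non-compact and the Lipschitz bound for $H_tu_0$ degenerates as $t\to0$); moreover $w=\log(M/u)$ and $|\nabla w|^2$ lack the integrability needed for any weighted $L^2$ argument unless you regularize $u\mapsto u+\epsilon$, which your plan never does (heat-semigroup mollification of $u_0$ gives neither a positive lower bound on $u$ nor, when $q>2$, data in $L^2$).

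The paper's proof is organized precisely to avoid the maximum-point trick. It works with $P_\epsilon=\varphi(t)\frac{|\nabla u|^2}{u+\epsilon}+(u+\epsilon)\log(u+\epsilon)$, $\varphi(t)=t/(1-2Kt)$, and invokes Savar\'e's self-improved Bochner inequality (Lemma 2.3), whose extra term $\big(\frac{\langle\nabla u,\nabla|\nabla u|^2\rangle}{2|\nabla u|^2}\big)^2$ completes a square against the cross term $-\frac{\langle\nabla u,\nabla|\nabla u|^2\rangle}{u+\epsilon}$; this is the non-smooth surrogate for the Hessian term in Hamilton's computation and yields that $P_\epsilon$ is a genuine weak subsolution of the heat equation, with no drift and no evaluation at a maximum. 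Two further points your proposal omits are essential there: the singular part of the measure-valued Laplacian of $|\nabla u|^2/(u+\epsilon)$ must be shown nonnegative, and the conclusion is drawn not from a pointwise principle but from a Gaussian-weighted integrated $L^2$ maximum principle (Grigor'yan, Schoen--Yau), whose integrability hypothesis is checked via the $L^\infty\!\to\!{\rm Lip}$ regularization and the bound $u+\epsilon\ge\epsilon$; the case of $L^q$ data is then handled by approximation with $L^2\cap L^\infty$ data and lower semicontinuity of the Dirichlet energy. Without an ingredient playing the role of the self-improvement (or, alternatively, a rigorously justified weak maximum principle for the drift inequality with unbounded drift), your argument does not close.
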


 On the other hand, inspired by Perelman's $\mathcal W$-entropy, Ni \cite{ni04} introduced an entropy for the heat equation on an $n$-dimensional smooth Riemannian manifold $(M,g)$. For any smooth function $f$ on $M$ and $\tau>0$ with $\int_M(4\pi\tau)^{-n/2}e^{-f}dx=1,$ the entropy is given as
 $$\mathcal W(f,\tau):=\int_M\Big(\tau|\nabla f|^2+f-n\Big)\frac{e^{-f}}{(4\pi\tau)^{n/2}}dx.$$
 Let $u(x,t)$ be a positive solution of (\ref{eq1.1}) on a closed (i.e., compact and without boundary)
 manifold $M$ with $\int_Mudx=1$. Let $f$ be defined as $u=(4\pi\tau)^{-n/2}e^{-f}$ and
 $\tau=\tau(t)$ with $\frac{d\tau}{d t}=1$. In \cite{ni04}, Ni proved that, if $M$ has
 nonnegative Ricci curvature, then the entropy $\mathcal W(f,\tau)$ is monotone decreasing along
 the heat equation (as $t\to\infty$). Such monotonicity also discussed in \cite{c12}. Wang \cite{w13} extended it to a compact manifold with Bakry-\'Emery curvature bounded below.

 Our second main result is to extend Ni's monotonicity for heat equation to non-smooth metric measure space $(X,d,\mu)$.
Precisely, we have the following result.
\begin{thm}\label{thm1.3}
 Let $(X,d,\mu)$ be a {compact} metric measure space satisfying $RCD^*(0,N)$ with $N\in[1,\infty]$. Let $u(x,t)$ be a positive
 solution of the heat equation on $X\times [0,\infty)$ with initial value $u(x,0)=u_0(x)\in L^\infty(X)$.
  Then we have the following:\\
\indent  $(i)$ If $N=\infty$, by letting $f:=-\log u$, then the entropy
$$\mathcal W_\infty(f,t):=\int_X|\nabla f|^2\cdot e^{-f}d\mu  $$
 is monotone decreasing (as $t\to\infty$).\\
\indent  $(ii)$ If $N<\infty$, by letting  $f$ be defined as $u=(4\pi\tau)^{-N/2}e^{-f}$ and $\tau=\tau(t)>0$ with $\frac{d\tau}{d t}=1$, then the entropy
 $$\mathcal W_N(f,\tau):=\int_M\Big(\tau|\nabla f|^2+f-N\Big)\frac{e^{-f}}{(4\pi\tau)^{N/2}}d\mu$$
 is monotone decreasing (as $t\to\infty$).
\end{thm}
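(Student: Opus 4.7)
The plan is to differentiate the entropy directly along the heat flow, rewrite the derivative through the iterated carr\'e du champ $\Gamma_2$, and then invoke the dimensional Bochner inequality provided by $RCD^{\ast}(0,N)$. Compactness of $X$ together with the $RCD^{\ast}(0,N)$ structure ensures that $u=P_tu_0$ is strictly positive for $t>0$, lies in the domain of $\Delta$, and preserves mass, i.e.\ $\int_X u(\cdot,t)\,d\mu\equiv C:=\int_X u_0\,d\mu$. The Hamilton estimate of Theorem \ref{main} (applied with $K=0$) together with the Li--Yau bound of Garofalo-Mondino \cite{gam14} will furnish uniform control of $|\nabla\log u|$ and $\Delta\log u$ on any window $[\varepsilon,T]\subset(0,\infty)$, which is what is needed to legitimise the integrations by parts below.

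For case (i) I set $f=-\log u$, so $e^{-f}=u$ and $\mathcal{W}_\infty(f,t)=\int_X|\nabla\log u|^2\,u\,d\mu=:I(u)$, the Fisher information of $u$. Writing $g=\log u$, the heat equation becomes $g_t=\Delta g+\Gamma(g)$; integrating by parts twice and using the defining identity $\Gamma_2(g)=\tfrac12\Delta\Gamma(g)-\Gamma(g,\Delta g)$ produces the classical de Bruijn-type identity
\begin{equation*}
\frac{d}{dt}I(u)=-2\int_X\Gamma_2(\log u)\,u\,d\mu.
\end{equation*}
Under $RCD^{\ast}(0,\infty)$ the self-improved Bochner inequality gives the pointwise bound $\Gamma_2(\log u)\ge 0$; since $u>0$, this yields $\dot I(u)\le 0$ and establishes (i).

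For case (ii), using mass conservation and the elementary computation $\frac{d}{dt}\int_X u\log u\,d\mu=-I(u)$, I recast
\begin{equation*}
\mathcal{W}_N(f,\tau)=\tau I(u)-\int_X u\log u\,d\mu-\tfrac{CN}{2}\log(4\pi\tau)-NC,
\end{equation*}
so that, since $\dot\tau=1$,
\begin{equation*}
\frac{d}{dt}\mathcal{W}_N=2I(u)+\tau\dot I(u)-\frac{CN}{2\tau}.
\end{equation*}
Invoking the dimensional Bochner inequality $\Gamma_2(h)\ge(\Delta h)^2/N$ with $h=\log u$ gives $\tau\dot I(u)\le -(2\tau/N)\int_X(\Delta\log u)^2u\,d\mu$; substituting this and using $\int_X(\Delta\log u)\,u\,d\mu=-I(u)$ together with $\int_X u\,d\mu=C$ to complete a square, one arrives at
\begin{equation*}
\frac{d}{dt}\mathcal{W}_N\le -\frac{2\tau}{N}\int_X\Bigl(\Delta\log u+\frac{N}{2\tau}\Bigr)^2 u\,d\mu\le 0,
\end{equation*}
which is (ii).

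The main obstacle will be to justify rigorously, in the non-smooth $RCD$ setting, both the identity $\dot I(u)=-2\int_X\Gamma_2(\log u)\,u\,d\mu$ and the \emph{pointwise} application of Bochner's inequality to $\log u$, since $\log u$ lies only in a weighted Sobolev class and $\Gamma_2$ is \emph{a priori} an integrated bilinear object. I plan to handle this by approximating $u_0$ with bounded truncations $u_{0,k}:=(u_0\vee k^{-1})\wedge k$: Theorem \ref{main} then renders $\log P_tu_{0,k}$ uniformly Lipschitz in space, and the integrated Bochner inequality of Ambrosio-Gigli-Savar\'e places $\Delta\log P_tu_{0,k}$ in $L^2$, so that the $L^2$-form of the dimensional Bochner inequality applies directly to the approximants. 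Passage to the limit $k\to\infty$ then uses the $L^p$-contractivity of $P_t$ and the lower semicontinuity of $I$. Theorem \ref{main} plays the pivotal role here, because its uniform $L^\infty$ control of $|\nabla\log u|$ is exactly what drives the approximation procedure.
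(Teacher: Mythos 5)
Your formal computation is, at its core, the same mechanism the paper uses: Lemmas \ref{lem4.1} and \ref{lem4.2} are precisely the Bochner/$\Gamma_2$ computation with the same completed square $\frac{2\tau}{N}\bigl(\Delta\log u+\frac{N}{2\tau}\bigr)^2$, only organized as a pointwise, measure-valued parabolic inequality for $w_\epsilon=|\nabla u|^2/u_\epsilon^2-2\partial_tu/u_\epsilon$ and $W_\epsilon$, which is then multiplied by $u_\epsilon$ and integrated over the compact $X$, rather than as a direct differentiation of the integrated entropy. Two genuine differences are worth recording. First, you apply the dimensional Bochner inequality \eqref{eq2.4} directly to $h=\log u$ (after regularization); this is legitimate once the regularized logarithm lies in the class $\mathbb D_\infty$ of Lemma \ref{lem2.3}, and it lets you bypass the self-improved inequality \eqref{eq2.5} which the paper needs because it applies Bochner to $u$ itself and completes squares afterwards --- your route is closer to Ni's smooth argument and is a mild simplification. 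Second, you regularize by truncating the initial datum, while the paper keeps the single solution $u$ and works with $u_\epsilon=u+\epsilon$, so its $\epsilon\to0$ limit concerns quantities attached to one fixed $u$ and is immediate. Note also that in the $RCD$ setting your identity $\frac{d}{dt}I(u)=-2\int_X u\,\Gamma_2(\log u)\,d\mu$, with $I(u):=\int_X|\nabla\log u|^2u\,d\mu$, must be read against the measure-valued object $\Delta^*\Gamma(\log u)$; integrating against the nonnegative weight $u$ is fine on compact $X$, but you are then using that the singular part is nonnegative (Lemma \ref{lem2.3}) --- the paper explicitly flags this point as crucial, and your write-up should make it explicit as well.

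The one step that, as written, does not work is the passage $k\to\infty$. You prove monotonicity of the entropy of the approximating solutions $u_k:=H_tu_{0,k}$ and then invoke only lower semicontinuity of $I$; but lower semicontinuity gives $\mathcal W(u(T))\le\liminf_k\mathcal W(u_k(T))\le\liminf_k\mathcal W(u_k(s))$ for $s<T$ and provides no bound of the right-hand side by $\mathcal W(u(s))$, so monotonicity does not transfer to the limit. What you need is genuine convergence $I(u_k(t))\to I(u(t))$ (and of $\int_Xu_k\log u_k\,d\mu$ and of the masses) at each fixed $t>0$. This can be repaired with tools already in the paper: for $k>\|u_0\|_{L^\infty}$ one has $0\le u_{0,k}-u_0\le k^{-1}$, hence $u\le u_k\le u+k^{-1}$ by positivity of $H_t$, and Lemma \ref{lem2.2} applied to $H_t(u_{0,k}-u_0)$ gives $\||\nabla(u_k-u)(\cdot,t)|\|_{L^\infty}\le C(t)\,k^{-1}$; combined with the fact that $u(\cdot,t)$ is Lipschitz and everywhere positive on the compact $X$, so that $u(\cdot,t)\ge c(t)>0$ (the same fact the paper uses when letting $\epsilon\to0$), this yields the required convergence. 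Alternatively, adopting the paper's regularization $u_\epsilon=u+\epsilon$ removes the difficulty altogether, since only one solution is ever involved. Finally, Hamilton's estimate and the Li--Yau bound are not what drives the justification: for the regularized functions the Lipschitz bound of Lemma \ref{lem2.2} together with the lower bound $u_k\ge k^{-1}$ (or $u_\epsilon\ge\epsilon$) already places $\log u_k$ in the domain where Lemma \ref{lem2.3} applies, whereas Hamilton's inequality degenerates where $u$ is small and cannot substitute for that lower bound.
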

According to the referee's suggestion, we post the following question:
\begin{ques}
Extend Theorem 1.3 to the case when  $(X,d,\mu)$ is not compact and $\mu$ is a $\sigma$-finite measure on $X$.
\end{ques}

The paper is organized as follows. In Section 2, we recall some necessary materials for Sobolev spaces and
Riemannian curvature-dimension condition on metric measure spaces.
 Hamilton's gradient estimate \eqref{eq1.2}  will be proved in the third section. In the last section, we will prove the monotonicity result Theorem \ref{thm1.3}.\\

\noindent{\textbf{Acknowledgments}}. We are grateful to the anonymous referee for his/her several
interesting and helpful suggestions and comments, which improves this paper.

\section{Preliminaries}
\hskip\parindent In this section, we recall some basic notions and several auxiliary results. Let $(X,d)$ be a complete, separable and proper metric space,
$\mu$ be a $\sigma$-finite Radon measure, with ${\rm supp}(\mu)=X.$

\subsection{Sobolev spaces on metric measure spaces}
\hskip\parindent Let $C([0, 1],X)$ be the space of continuous curves on $[0, 1]$ with values in $X$, which we
endow with the sup norm. For $t\in [0, 1]$, the map $e_t :C([0, 1],X) \to X$ is the evaluation at
time $t$ defined by
$$e_t(\gz):=\gz_t.$$
Given a non-trivial closed interval $I\subset \rr$, a curve $\gz: I \to X$ is in the absolutely continuous
class $AC^q([0,1],X)$ for some $q\in [1,\infty]$, if there exists $f\in L^q(I)$ such that, for all $s,t\in I$ and $s<t$, it holds
$$d(\gz_t,\gz_s)\le\int_s^tf(r)\,dr.$$

\begin{defn}[Test Plan]
Let $\uppi\in {\mathcal{P}}(C([0, 1],X))$.
We say that $\uppi$ is a test plan if there exists $C>0$ such that
$$(e_t)_\sharp{ \uppi}\le C\mu, \ \forall\,t\in [0,1],$$
and
$$\int \int_0^1|\dot{\gamma}_t|^2\,dt\,d{\mathcal \uppi}(\gamma)<\infty.$$
  \end{defn}

\begin{defn}[Sobolev Space] \label{sobolev} The Sobolev class $S^2(X)$ (resp. $S_{\mathrm{loc}}^2(X)$)
is the space of all Borel functions $f: X\to \rr$, for which there exists a non-negative function
$G\in L^2(X)$ (resp. $G\in L^2_{\mathrm{loc}}(X)$) such that, for each test plan $\uppi$, it holds
\begin{equation}\label{curve-sobolev}
\int |f(\gz_1)-f(\gz_0)|\,d{\mathcal \uppi}(\gamma)\le \int \int_0^1 G(\gz_t)|\dot{\gamma}_t|^2\,dt\,d{\mathcal \uppi}(\gamma).
\end{equation}
\end{defn}

It has been proved that (see \cite{ch,sh,ags4}), for each  $f\in S^2(X)$, there exists a unique minimal $G$
in the $\mu$-a.e. sense such that \eqref{curve-sobolev} holds. We then denote the minimal $G$ by $|\nabla  f|$
and call it the minimal weak upper gradient following \cite{ags4}.

We then define the in-homogeneous Sobolev space $W^{1,2}(X)$ as  $S^2(X)\cap L^2(X)$ equipped with the norm
$$\|f\|_{W^{1,2}(X)}:=\lf(\|f\|_{L^2}^2+\| |\nabla  f|\|_{L^2}^2\r)^{1/2}.$$

\begin{defn}[Local Sobolev Space]  Let $\Omega\subset X$ be an open set. A Borel function $f: \Omega\to \rr$ belongs to
$S^2_{\mathrm{loc}}(\Omega)$, provided, for any Lipschitz function $\chi: X \to\rr$ with $\supp(\chi)\subset \Omega$,  it holds
$f\chi\in S^2_{\mathrm{loc}}(X)$. In this case, the function $|\nabla f| : \Omega\to[0,\infty]$ is $\mu$-a.e. defined by
$$|\nabla f| := |\nabla (\chi f)|,\ \  \mu-a.e. \,\mathrm{on} \ \{\chi=1\},$$
 for any $\chi$ as above. The space $S^2(\Omega)$ is the collection of such $f$ with $|\nabla f|\in L^2(\Omega)$.
\end{defn}

 Notice that, if $\mu$ is locally doubling and $(X,d,\mu)$ supports a local weak $L^2$-Poincar\'e inequality, the Sobolev space $W^{1,2}(X)$
  coincides with the Sobolev spaces based on upper gradients introduced by Cheeger \cite{ch} and Shanmugalingam \cite{sh};
  see Ambrosio, Gigli and Savar\'e \cite{ags4}.

Fixed any open set $\Omega\subset X$. We denote by $\Lip(\Omega)$ (or $\Lip_{\rm loc}(\Omega)$, or $\Lip_0(\Omega)$) the space of Lipschitz continuous functions on $\Omega$ (resp. or locally Lipschitz continuous functions, or Lipschitz continuous functions with compact support on $\Omega$).
The local Sobolev space $W^{1,2}_{\mathrm{loc}}(\Omega):=L^2_{\rm loc}(\Omega)\cap S^2_{\mathrm{loc}}(\Omega)$,  and the Sobolev space with compact support
$W^{1,2}_{0}(\Omega)$ is defined as the completion of $\Lip_0(\Omega)$ with respect to the $W^{1,2}$-norm. We also denote $C_0(\Omega)$ by the continuous function with compact support on $\Omega$ with uniform norm.

\subsection{Differential structure and the Laplacian}
\hskip\parindent  The following terminologies and results are mainly taken from \cite{ags3,gi12}.
\begin{defn}[Infinitesimally Hilbertian Space] Let $(X, d,\mu)$ be a metric measure
space. We say that it is infinitesimally Hilbertian, provided $W^{1,2}(X)$ is a Hilbert space.
\end{defn}
Notice that, from the definition, it follows that $(X, d,\mu)$ is infinitesimally Hilbertian if and only if,
for any $f,g\in S^2(X)$, it holds
$$\||\nabla (f+g)|\|_{L^2(X)}^2+\||\nabla (f-g)|\|_{L^2(X)}^2=2\lf(\||\nabla f|\|_{L^2(X)}^2+\||\nabla g|\|_{L^2(X)}^2\r).$$

\begin{defn} Let $(X, d,\mu)$ be an infinitesimally Hilbertian space, $\Omega\subset X$ an open set and $f, g\in S^2_{\mathrm{loc}}(\Omega)$.
The map $\la \nabla f, \nabla g\ra :\, \Omega \to \rr$ is $\mu$-a.e. defined as
$$\la \nabla f, \nabla g\ra:= \inf_{\ez>0} \frac{|\nabla (g+\ez f)|^2-|\nabla g|^2}{2\ez}$$
the infimum being intended as $\mu$-essential infimum.
\end{defn}

The inner product $\la \nabla f, \nabla g\ra$ is linear, and satisfies Cauchy-Schwartz inequality,
Chain rule and Leibniz rule; see Gigli \cite{gi12}.

The inner product also provides a canonical strongly local Dirichlet form $(\mathscr E,W^{1,2}(X))$  by
$$\mathscr{E}(f,g):=\int_X\la \nabla f,\nabla g\ra\,d\mu,\qquad\forall\ f,g\in W^{1,2}(X).$$
We denote by $\Delta_{\mathscr E}$ and  $H_t$  the generator  of $\mathscr E$ and the corresponding  semigroup (heat flow) $e^{t\Delta_{\mathscr E}}$.
The domain of the generator is denoted by ${\mathscr D}(\Delta_{\mathscr E})$. For each $u_0\in L^2(X)$, the heat flow $H_tu_0(x)$ provides a (unique) solution of heat equation $\partial_t u=\Delta_{\mathscr E} u$ in $W^{1,2}(X).$

The generator $\Delta_{\mathscr E}$ gives a natural definition of Laplacian. However, with the aid of the inner product, Gigli in \cite{gi12} and Gigli-Mondino in\cite{gm13} have defined a measure-valued Laplacian operator (or more general elliptic operators) as below (see also \cite{per11,zz12} for the case of Alexandrov spaces).

\begin{defn}[Laplacian] \label{glap}
Let $(X, d,\mu)$ be an infinitesimally Hilbertian space.
Let $f\in W^{1,2}_{\mathrm{loc}}(X)$. We call $f\in {\mathscr D}(\Delta^*)$, if there exists a signed Radon measure $\nu$  such that, for each
 $\psi\in \Lip_0(X)$, it holds
\begin{equation*}
-\int_X\la \nabla f,\nabla\psi\ra\,d\mu=\int_X \psi\,d\nu.
\end{equation*}
 If such $\nu$ exists, it must be unique. We will write $\Delta^* f=\nu$.

 We also denote by
  $${\mathscr D}(\Delta):=\left\{f\in {\mathscr D}(\Delta^*)\ :\ \Delta^* f\ll \mu \ \ {\rm and\ the\ density}\ \ \frac{d(\Delta^*f)}{d\mu}\in L_{\loc}^2(X)\right\}$$
  When $f\in {\mathscr D}(\Delta)$, we denote $\Delta f:=\frac{d(\Delta^*f)}{d\mu}$.
\end{defn}
\noindent Note that, if $f\in \mathscr D(\Delta^*)$, the test function $\psi$ can be chosen in $W^{1,2}_0(X)\cap L^1_{\rm loc}(X,|\nu|).$ The operator $\Delta^*$ is linear
due to $(X, d,\mu)$ being infinitesimally Hilbertian.

 It is clear that  ${\mathscr D}(\Delta_{\mathscr E})\subset{\mathscr D}(\Delta)\ (\varsubsetneq{\mathscr D}(\Delta^*)).$ Since we consider only that $X$ is proper,
it is proved in \cite[Proposition 4.24]{gi12} that
$${\mathscr D}(\Delta)\cap W^{1,2}(X)={\mathscr D}(\Delta_{\mathscr E})\qquad {\rm and}\qquad
  \Delta f=\Delta_{\mathscr E}f \quad\forall  f\in {\mathscr D}(\Delta_{\mathscr E}).$$

Such a measure-valued Laplacian $\Delta^*$ satisfies the following Chain rule and Leibniz rule (we consider only the case where $X$ is proper):

\begin{lem}\label{lem2.1}  Let $(X,d,\mu)$ be infinitesimally Hilbertian and proper.\\
{\rm(i)\ (Chain rule)}\ \ \  Let $g\in {\mathscr D}(\Delta^*)\cap L^\infty_{\rm loc}(X)$ and $\phi\in C^{1,1}_{\rm loc}(\rr)$. If $g\in C(X)$ or
$g\in {\mathscr D}(\Delta)$, then we have
\begin{equation*}
\phi\circ g\in{\mathscr D}(\Delta^*)\qquad {\rm and}\qquad\Delta^*(\phi\circ g)=\phi'\circ g\cdot\Delta^*g+\phi''\circ g|\nabla g|^2\cdot\mu.
\end{equation*}
{\rm(ii)\ (Leibniz rule)}\ \ \ Let $g_1,g_2\in {\mathscr D}(\Delta^*)$. Then we have
\begin{equation*}
g_1\cdot g_2\in {\mathscr D}(\Delta^*)\qquad {\rm and}\qquad \Delta^*(g_1\cdot g_2)=g_1\cdot\Delta^*g_2+g_2\cdot\Delta^*g_1+2\ip{\nabla g_1}{\nabla g_2}\cdot\mu
\end{equation*}
provided that $g$ satisfies one of the following three conditions:\\
\indent \rm{(a)}\ \ $g_1,g_2\in C(X)$;\\
\indent  \rm{(b)}\ \ $g_1,g_2\in {\mathscr D}(\Delta)\cap L^\infty_{\rm loc}(X)$;\\
\indent  \rm{(c)}\ \ $g_2\in W^{1,2}(X)$ and $g_1\in {\rm Lip}_{\rm loc}(X)\cap{\mathscr D}(\Delta)$.
\end{lem}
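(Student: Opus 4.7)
The plan is to reduce both identities to the defining property of $\Delta^*$ in Definition \ref{glap} by combining the already-available chain and Leibniz rules for the inner product $\langle\nabla\cdot,\nabla\cdot\rangle$ (listed just after Definition 2.6) with the enlarged admissible test-function class $W^{1,2}_0(X)\cap L^1_{\rm loc}(X,|\nu|)$ permitted by the Note after Definition \ref{glap}. In both cases, the scheme is: start from an arbitrary $\psi\in\Lip_0(X)$, expand $\langle\nabla(\phi\circ g),\nabla\psi\rangle$ or $\langle\nabla(g_1g_2),\nabla\psi\rangle$ via the pointwise rules, then regroup so that one factor of the integrand plays the role of a modified test function paired with $\nabla g$ (or $\nabla g_i$), and finally verify that this modified test function lies in the enlarged admissible class.

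For the chain rule, applying the pointwise chain rule and then the identity $\phi'\circ g\cdot\nabla\psi=\nabla(\psi\cdot\phi'\circ g)-\psi\cdot\phi''\circ g\cdot\nabla g$ (pointwise Leibniz and chain rules) gives
$$-\int_X\langle\nabla(\phi\circ g),\nabla\psi\rangle\,d\mu=-\int_X\langle\nabla g,\nabla(\psi\cdot\phi'\circ g)\rangle\,d\mu+\int_X\psi\cdot\phi''\circ g\cdot|\nabla g|^2\,d\mu.$$
The candidate test function $\psi\cdot\phi'\circ g$ is bounded with compact support and lies in $W^{1,2}_0(X)$, because $\phi'\in\Lip_{\rm loc}(\rr)$ together with $g\in L^\infty_{\rm loc}\cap W^{1,2}_{\rm loc}$ gives $\phi'\circ g\in W^{1,2}_{\rm loc}\cap L^\infty_{\rm loc}$. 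Continuity of $g$ (case $g\in C(X)$) makes $\psi\cdot\phi'\circ g$ a bounded continuous compactly supported function, hence automatically in $L^1_{\rm loc}(|\Delta^*g|)$; while $g\in\mathscr D(\Delta)$ ensures $\Delta^*g=\Delta g\cdot\mu$ with $\Delta g\in L^2_{\rm loc}$, so the same conclusion follows by bounded-times-$L^2_{\rm loc}$. The definition of $\Delta^*g$ then identifies the first right-hand integral with $\int_X\psi\cdot\phi'\circ g\,d(\Delta^*g)$, yielding the desired formula.

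For the Leibniz rule, starting from $\nabla(g_1g_2)=g_1\nabla g_2+g_2\nabla g_1$ and then $g_i\nabla\psi=\nabla(g_i\psi)-\psi\nabla g_i$ produces
$$-\int_X\langle\nabla(g_1g_2),\nabla\psi\rangle\,d\mu=-\int_X\langle\nabla g_2,\nabla(g_1\psi)\rangle\,d\mu-\int_X\langle\nabla g_1,\nabla(g_2\psi)\rangle\,d\mu+2\int_X\psi\langle\nabla g_1,\nabla g_2\rangle\,d\mu.$$
It remains to verify that in each of the three scenarios $g_i\psi$ is admissible as a test function for $\Delta^*g_j$: under (a), $g_i\psi$ is bounded continuous with compact support; under (b), $g_i\psi$ is bounded with compact support and $\Delta^*g_j=\Delta g_j\cdot\mu$ with $\Delta g_j\in L^2_{\rm loc}$; under (c), $g_1\psi\in\Lip_0(X)$ outright, while $g_2\psi\in W^{1,2}_0(X)$ pairs against the $L^2_{\rm loc}$-density $\Delta g_1$. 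In each case the definition of $\Delta^*g_j$ applies, and assembling the pieces gives the stated formula.

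The main obstacle I anticipate is the case-by-case admissibility check for the modified test functions in the enlarged class $W^{1,2}_0(X)\cap L^1_{\rm loc}(X,|\nu|)$; the algebraic manipulations themselves are immediate from the pointwise chain and Leibniz rules already available. The hypothesis $\phi\in C^{1,1}_{\rm loc}$ is precisely what renders $\phi''$ a locally bounded function (defined almost everywhere on $\rr$), which is what justifies reading $\phi''\circ g\cdot|\nabla g|^2$ as an $L^1_{\rm loc}$ density in the chain-rule formula.
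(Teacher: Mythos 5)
Your proposal takes a genuinely different route from the paper, and it is worth separating the parts. For the only portion the paper actually proves rather than cites --- the Leibniz rule under hypothesis (c) --- the paper argues by approximation: it takes Lipschitz functions $h_j\in \Lip(X)\cap\mathscr D(\Delta_{\mathscr E})$ converging to $g_2$ strongly in $W^{1,2}(X)$ (Mazur's lemma), applies case (b) to $g_1h_j$, writes the weak formulation against $\phi\in\Lip_0(X)$, and passes to the limit. You instead verify the identity directly, testing $\Delta^*g_2$ against $g_1\psi\in\Lip_0(X)$ and $\Delta^*g_1=\Delta g_1\cdot\mu$ against $g_2\psi\in W^{1,2}_0(X)\cap L^1(|\Delta g_1|\,d\mu)$. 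Since in case (c) the only measure you pair a merely-Sobolev test function with is absolutely continuous with $L^2_{\rm loc}$ density, no representative issue arises, and this is a legitimate and arguably more economical argument --- provided you add two justifications you currently skip: the gradient Leibniz identities you use ($\nabla(g_1g_2)=g_1\nabla g_2+g_2\nabla g_1$ and $g_2\nabla\psi=\nabla(g_2\psi)-\psi\nabla g_2$) are available in the calculus cited after Definition 2.6 for factors in $S^2_{\rm loc}\cap L^\infty_{\rm loc}$, whereas $g_2\in W^{1,2}(X)$ need not be locally bounded, so a truncation of $g_2$ (using locality of minimal weak upper gradients) is required; and the membership $g_2\psi\in W^{1,2}_0(X)$ should be checked, e.g.\ via density of $\Lip_0(X)$ in $W^{1,2}(X)$ on the proper space $X$.

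For part (i) and cases (a), (b), the paper gives no proof at all --- it quotes Gigli's Propositions 4.28 and 4.29 --- and here your direct derivation has a genuine gap precisely in the situations where $\Delta^*g$ (resp.\ $\Delta^*g_j$) may have a nontrivial singular part, namely the chain rule with $g$ merely continuous and case (a). The note after Definition 2.7 enlarges the admissible test class to $W^{1,2}_0(X)\cap L^1_{\rm loc}(X,|\nu|)$, but an element of $W^{1,2}_0(X)$ is only defined $\mu$-a.e., so pairing it against the singular part of $\nu$ requires a distinguished (quasi-continuous) representative, and extending the defining identity from $\Lip_0(X)$ to such test functions cannot be achieved by $W^{1,2}$-approximation alone: $W^{1,2}$-convergence gives no control whatsoever on integrals against the singular part of $\nu$. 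Your observation that $\psi\cdot\phi'\circ g$ (resp.\ $g_i\psi$) is bounded, continuous and compactly supported, hence lies in $L^1_{\rm loc}(|\nu|)$, settles integrability but not this identification of the pairing; that identification is exactly the nontrivial content of Gigli's propositions. When the relevant measure is absolutely continuous --- the chain rule with $g\in\mathscr D(\Delta)$ and case (b) --- your direct argument is fine, modulo the same $W^{1,2}_0$ density remark as above.
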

\begin{proof}
Chain rule (i) and (a), (b) in Leibniz rule (ii) have proved in \cite[Proposition 4.28 and Proposition 4.29]{gi12} by Gigli. We need only to check the condition (c) in Leibniz rule.

 We argue by  approximation. Since $g_2\in W^{1,2}(X)$, it is shown in \cite{ags3} that there exists a sequence $\{\widetilde{h}_j\}_{j=1}^\infty\subset {\rm Lip}(X)\cap W^{1,2}(X)$ such that $$\widetilde{h}_j\overset{L^2}{\to}g_2\qquad {\rm and}\qquad |\nabla \widetilde{h}_j|\overset{L^2}{\to}|\nabla  g_2|$$ as $j\to\infty$. Notice that $W^{1,2}(X)$ is a Hilbert space (hence it is reflexive), we can use Mazur's lemma to conclude that there exists a convex combination of $\{\widetilde{h}_j\}$, denoted by $\{h_j\}$, which strongly converges  to $g_2$ in $W^{1,2}(X).$ On the other hand, $\mathscr D(\Delta_{\mathscr E})$ is also dense in $W^{1,2}(X)$. Thus, without loss of generality, we can assume that $h_j\in {\rm Lip}(X)\cap \mathscr D(\Delta_{\mathscr E})$ for all $j$, and  $h_j\overset {W^{1,2}(X)}{\to}g_2$.

 Since $g_1\in {\rm Lip}_{\rm loc}(X)\cap{\mathscr D}(\Delta)$ and $h_j\in {\rm Lip}(X)\cap \mathscr D(\Delta_{\mathscr E})$, by (b), we have, for each $j\in \mathbb N$,
 $$\Delta^*(g_1\cdot h_j)=g_1\cdot\Delta^*h_j+h_j\cdot\Delta^* g_1+2\ip{\nabla g_1}{\nabla h_j}\cdot\mu.$$
  Let us fix a test function $\phi\in \Lip_0(X)$. We have
    $$-\!\int_X\!\ip{\nabla \phi}{\nabla (g_1  h_j)}d\mu=-\!\int_X\!\ip{\nabla (\phi g_1)}{\nabla  h_j}d\mu -\!\int_X\!\ip{\nabla (h_j \phi)}{\nabla g_1}d\mu +2\!\int_X\!\phi\cdot\ip{\nabla g_1}{\nabla  h_j}  d\mu.$$
 Notice that the combination of $g_1\in {{\rm Lip}_{\rm loc}(X)} $ and $h_j\overset {W^{1,2}(X)}{\to}g_2$ implies that $g_1\cdot h_j\overset {W_{\rm loc}^{1,2}(X)}{\to}g_1\cdot g_2$.
 The same holds for the sequence  $\{\phi \cdot h_j\}$.
 Letting $j\to\infty$, we can get
 $$-\!\int_X\!\ip{\nabla \phi}{\nabla (g_1g_2)}d\mu=-\!\int_X\!\ip{\nabla(\phi g_1) }{\nabla g_2}d\mu-\!\int_X\!\ip{\nabla(\phi g_2)}{\nabla g_1}d\mu+2\int_X\phi\cdot\ip{\nabla g_1}{\nabla
 g_2}d\mu.$$
 This proves the Lemma.
\end{proof}

\subsection{Curvature-dimension conditions and consequences}
\hskip\parindent
We shall use the following definition for $RCD^*(K,N)$ spaces, which is equivalent to the original definition \cite{eks13,ams13-1}.
Here and in the sequel, for $K = 0$, $ \frac{2Kt}{e^{2Kt}-1}:= \lim_{K\to0} \frac{2Kt}{e^{2Kt}-1} = 1$.
\begin{defn}[$RCD^*(K,N)$ Space]\label{rcd}
Let $(X,d,\mu)$ be  an infinitesimally Hilbertian space, and let $H_t$ be the semigroup corresponding to the previous Dirichlet form $(\mathscr E,W^{1,2}(X))$.

Given $K\in \rr$ and $N\in [1,\infty]$, the space $(X,d,\mu)$ is called a $RCD^*(K,N)$ space,
 if the following three conditions are satisfied:\\
 {\rm (i)}\ there exist $x_0\in X$, and constants $c_1,c_2>0$, such that
\begin{equation}\label{vol-growth}
\mu(B(x_0,r))\le c_1\cdot e^{c_2r^2};
\end{equation}
{\rm (ii)}  for all $f\in W^{1,2}(X)$ and each $t>0$, it holds for $\mu$-a.e. $x\in X$ that
\begin{equation}\label{eq2.3}
|\nabla H_tf(x)|^2\le e^{-2Kt}H_t(|\nabla f|^2)(x)-\frac{4Kt^2}{N(e^{2Kt}-1)}|\Delta H_tf(x)|^2,
\end{equation}
where, if $N=\infty$, the last term is understood as $0$;\\
{\rm (iii)} if $f\in W^{1,2}(X)\cap L^\infty(X)$ satisfying $|\nabla f|\ls 1$, then $f$ has an 1-Lipschitz representative.
\end{defn}

Let $(X,d,\mu)$ be a $RCD^*(K,N)$ space with $K\in\mathbb R$ and $N\in[1,\infty)$. Then the measure $\mu$ is local doubling, and hence $X$ is proper.

It is known that, (see \cite[\S 4]{gi12}), for each $t>0$, the operator $H_t$ is bounded
from $L^p(X)$ to $L^p(X)$ for any $p\in[1,\infty)$. Thus, the semigroup
(heat flow) can be extended on $L^p(X)$ for any $p\in [1,\infty).$ Therefore, for each $p\in [1,\infty)$  and $u_0(x)\in L^p(X)$, the heat flow $H_tu_0(x)$ provides a solution of heat equation $\partial_t u=\Delta u$. When $p\in (1,\infty)$, the $L^p$ solution of the heat equation is uniquely determined by its initial value in $L^p(X)$ (see \cite{str83,li84}).

\begin{lem}[\cite{agmr,ags1}]\label{lem2.2}\indent
Let $K\in \rr$, and let $(X,d,\mu)$ be  a proper $RCD^*(K,\infty)$ space. Then,  for any $t>0$ and $f\in L^\infty(X)\cap L^2(X)$, we have $H_tf\in {\rm Lip}(X)$ and
$$ {\rm Lip}(H_tf)\ls \frac{\|f\|_{L^\infty}}{\sqrt{2\int_0^te^{2Ks}ds}}.$$
\end{lem}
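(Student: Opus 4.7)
The plan is to derive an $L^\infty$ bound on $|\nabla H_tf|$ and then invoke condition (iii) in the definition of $RCD^*(K,\infty)$ to pass from ``weak gradient bounded'' to ``Lipschitz representative''.

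First, since $f\in L^2(X)\cap L^\infty(X)$ and $H_t$ is a contraction on both $L^2$ and $L^\infty$, we have $H_sf\in L^2\cap L^\infty$ for every $s\ge 0$, and $H_sf\in {\mathscr D}(\Delta_{\mathscr E})\subset W^{1,2}(X)$ for every $s>0$. Fix $t>0$. Define the auxiliary function
\begin{equation*}
\Phi(s):=H_s\bigl((H_{t-s}f)^2\bigr),\qquad s\in[0,t].
\end{equation*}
Using the Leibniz rule (Lemma 2.1(ii), case (c), which applies because $H_{t-s}f$ is locally Lipschitz and in $\mathscr D(\Delta)\cap W^{1,2}(X)$ for $s<t$) together with $\partial_rH_rg=\Delta H_rg$, a formal differentiation yields
\begin{equation*}
\Phi'(s)=H_s\bigl[\Delta(H_{t-s}f)^2-2H_{t-s}f\cdot\Delta H_{t-s}f\bigr]=2H_s\bigl(|\nabla H_{t-s}f|^2\bigr).
\end{equation*}

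Next I apply the Bakry–Émery inequality (\ref{eq2.3}) with $N=\infty$ to the function $H_{t-s}f$: it gives, pointwise $\mu$-a.e.,
\begin{equation*}
|\nabla H_tf|^2=|\nabla H_s(H_{t-s}f)|^2\le e^{-2Ks}H_s\bigl(|\nabla H_{t-s}f|^2\bigr)=\tfrac{1}{2}e^{-2Ks}\Phi'(s).
\end{equation*}
Multiplying by $e^{2Ks}$ and integrating over $s\in(0,t)$, I obtain
\begin{equation*}
|\nabla H_tf|^2\cdot\int_0^t e^{2Ks}\,ds\le \tfrac{1}{2}\bigl(\Phi(t)-\Phi(0)\bigr)=\tfrac{1}{2}\bigl(H_t(f^2)-(H_tf)^2\bigr)\le \tfrac{1}{2}\|f\|_{L^\infty}^2,
\end{equation*}
since $H_t(f^2)\le \|f\|_{L^\infty}^2$ by the $L^\infty$-contractivity of $H_t$. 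Hence, $\mu$-a.e.,
\begin{equation*}
|\nabla H_tf|\le \frac{\|f\|_{L^\infty}}{\sqrt{2\int_0^t e^{2Ks}\,ds}}=:C.
\end{equation*}

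To upgrade this $\mu$-a.e.\ gradient bound to a genuine Lipschitz estimate, note that $H_tf\in W^{1,2}(X)\cap L^\infty(X)$ and $|\nabla(H_tf/C)|\le 1$ $\mu$-a.e. Condition (iii) in Definition 2.4 then provides a $1$-Lipschitz representative of $H_tf/C$, giving the desired $\mathrm{Lip}(H_tf)\le C$.

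The main technical point I would have to handle carefully is the justification of the differentiation $\Phi'(s)=2H_s(|\nabla H_{t-s}f|^2)$: one needs to know that $s\mapsto (H_{t-s}f)^2$ is $C^1$ into the right function space and that $H_s$ commutes with the differentiation, which is standard semigroup calculus on the Hilbertian Dirichlet space $(\mathscr E,W^{1,2}(X))$ once we know $H_{t-s}f\in{\mathscr D}(\Delta_{\mathscr E})\cap L^\infty$ for $s<t$ and apply the chain/product rule of Lemma 2.1. A minor subtlety (near $s=t$) can be dealt with by first working on $s\in[0,t-\varepsilon]$ and letting $\varepsilon\downarrow 0$ via Fatou.
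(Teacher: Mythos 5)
The paper does not prove Lemma \ref{lem2.2} at all---it is quoted directly from \cite{agmr,ags1}---and your argument is essentially the standard Bakry--Ledoux interpolation proof used in those references: differentiate $\Phi(s)=H_s\bigl((H_{t-s}f)^2\bigr)$, apply the gradient contraction \eqref{eq2.3} with $N=\infty$, integrate in $s$ to get $2\int_0^te^{2Ks}\,ds\,|\nabla H_tf|^2\le H_t(f^2)-(H_tf)^2\le\|f\|_{L^\infty}^2$, and invoke the Sobolev-to-Lipschitz property (iii) of Definition \ref{rcd} to upgrade the $\mu$-a.e.\ gradient bound to a genuine Lipschitz representative. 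The proposal is correct and yields exactly the stated constant; the only point requiring care is the rigorous justification of $\Phi'(s)=2H_s\bigl(|\nabla H_{t-s}f|^2\bigr)$ (best done weakly against test functions, with the exceptional null sets handled by Fubini), which you have already identified.
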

Our proof of Theorem \ref{main} and Theorem \ref{thm1.3} relies on some self-improvements of regularity for heat flows under the Riemannian
curvature-dimension condition.
\begin{lem}[\cite{sa14}]\label{lem2.3}\indent
Let $K\in \rr$ and let $(X,d,\mu)$ be  a proper $RCD^*(K,\infty)$ space.
If $ f\in {\mathscr D}(\Delta_{\mathscr E})\cap \Lip(X)\cap L^\infty(X)$ with $ \Delta f\in W^{1,2}(X).$ Then $|\nabla f|^2\in W^{1,2}(X)\cap L^\infty(X)$ and $|\nabla f|^2\in {\mathscr D}(\Delta^*)$.
When we write the Lebesgue's decomposition of $\Delta^*(|\nabla f|^2)$ w.r.t $\mu$ as
 $$\Delta^*(|\nabla f|^2)=\Delta^{R}(|\nabla f|^2) \cdot\mu+\Delta^{S}(|\nabla f|)^2,$$
 then we have the estimates that $\Delta^{S}(|\nabla f|)^2\gs0$ and, for $\mu$-a.e. $\ x\in X$,
\begin{equation}\label{eq2.4}
\frac{1}{2} \Delta^{R}(|\nabla f|^2)\gs \frac{1}{N}(\Delta f)^2+ \la\nabla\Delta f,\nabla f \ra+K|\nabla f|^2.
\end{equation}
Furthermore,  we have, for  $\mu$-a.e. $  x\in \big\{y:\ |\nabla f(y)|\not=0\big\}$,
\begin{equation}\label{eq2.5}
\frac{1}{2} \Delta^{R}(|\nabla f|^2)\gs \frac{1}{N}(\Delta f)^2+ \la\nabla\Delta f,\nabla f \ra+K|\nabla f|^2+\frac{N}{N-1}\cdot\Big(\frac{\la \nabla f,\nabla |\nabla f|^2\ra}{2|\nabla f|^2}-\frac{\Delta f}{N}\Big)^2.
\end{equation}
Here and in the sequel, if $N=\infty$, then $\frac{1}{N}=0$ and $\frac{N}{N-1}=1.$
\end{lem}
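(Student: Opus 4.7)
The plan is to follow the Bakry--\'Emery $\Gamma_2$-calculus strategy adapted to the metric measure setting as in Savar\'e \cite{sa14}. Structurally, the argument splits into four stages: (a) verify $|\nabla f|^2\in W^{1,2}(X)\cap L^\infty(X)$; (b) identify $\Delta^*(|\nabla f|^2)$ as a signed Radon measure; (c) derive the dimensional Bochner inequality \eqref{eq2.4}; and (d) sharpen to \eqref{eq2.5} by Bakry's self-improvement. The starting point is the semigroup gradient estimate \eqref{eq2.3}, which is the integrated form of the Bochner inequality we wish to recover.

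For the regularity part, $|\nabla f|^2\in L^\infty(X)$ is immediate from $f\in\Lip(X)$. To get $|\nabla f|^2\in W^{1,2}(X)$, I would regularize by the heat semigroup and set $f_\epsilon:=H_\epsilon f$. By Lemma \ref{lem2.2} and the hypothesis $\Delta f\in W^{1,2}(X)$, each $f_\epsilon$ enjoys enough regularity to apply the chain/Leibniz rules of Lemma \ref{lem2.1}. Integrating the formal Bochner identity against the weight $\psi\in\Lip_0(X)$ and using \eqref{eq2.3} differentiated at $t=0$ gives a uniform (in $\epsilon$) $L^2$-bound of the form $\int|\nabla|\nabla f_\epsilon|^2|^2\,d\mu\le C\bigl(\|\Delta f\|_{W^{1,2}},\|f\|_\Lip,K\bigr)$; the passage $\epsilon\to0$ is then by weak compactness plus identification of the limit via the convergence $f_\epsilon\to f$ in $\mathscr D(\Delta_{\mathscr E})$.

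Next, to show $|\nabla f|^2\in\mathscr D(\Delta^*)$ together with the dimensional lower bound, I would test against an arbitrary nonnegative $\psi\in\Lip_0(X)$ and compute $-\int\la\nabla\psi,\nabla|\nabla f|^2\ra\,d\mu$. Using the semigroup estimate \eqref{eq2.3} at second order, together with the fact that $\psi\,\Delta f\in W^{1,2}(X)$ and the chain and Leibniz rules from Lemma \ref{lem2.1}(i)--(ii), this integral is bounded below by
\[
2\int\psi\left[\frac{(\Delta f)^2}{N}+\la\nabla\Delta f,\nabla f\ra+K|\nabla f|^2\right]d\mu.
\]
The Riesz representation theorem then produces the signed Radon measure $\Delta^*(|\nabla f|^2)$, while positivity of the testing gives nonnegativity of the singular part $\Delta^S(|\nabla f|)^2$ of the Lebesgue decomposition; the absolutely continuous part is exactly \eqref{eq2.4}.

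The main obstacle is the self-improvement to \eqref{eq2.5}, where the improved traceless-Hessian term with the sharp factor $N/(N-1)$ appears. The idea, due to Bakry, is to apply the already-established \eqref{eq2.4} not to $f$ but to $\phi\circ f$ for arbitrary $\phi\in C^{1,1}_{\rm loc}(\rr)$. By Lemma \ref{lem2.1}(i),
\[
\Delta^*(\phi\circ f)=\phi'(f)\Delta f\cdot\mu+\phi''(f)|\nabla f|^2\cdot\mu,\qquad |\nabla(\phi\circ f)|^2=\phi'(f)^2|\nabla f|^2,
\]
and a direct expansion of the chain and Leibniz rules in $\frac12\Delta^R(|\nabla(\phi\circ f)|^2)$ converts \eqref{eq2.4} applied to $\phi\circ f$ into a pointwise quadratic inequality in the two parameters $a=\phi'(f)(x)$ and $b=\phi''(f)(x)$. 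Since at a Lebesgue point $x$ with $|\nabla f(x)|\ne 0$ we may realize any prescribed pair $(a,b)$ by a suitable smooth $\phi$, the resulting polynomial must be nonnegative for all $(a,b)\in\rr^2$, and the associated discriminant condition collapses precisely to \eqref{eq2.5}. The delicate point is making this parameter selection rigorous in the nonsmooth setting: one needs Lebesgue differentiation on $(X,d,\mu)$, careful cut-offs in $\Lip_0(X)$, and a measurable selection argument to turn the pointwise quadratic bound into a $\mu$-almost-everywhere inequality on $\{|\nabla f|\ne0\}$ — this is the technical heart of Savar\'e's proof and what we would follow verbatim from \cite{sa14}.
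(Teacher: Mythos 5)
Your proposal is correct and reaches the same discriminant inequality, but the self-improvement step is organized differently from the paper. The paper, following Savar\'e, applies the dimensional Bochner bound \eqref{eq2.4} not to compositions but to the trilinear polynomial $\Phi(\mathbf f)=\lambda f^1+(f^2-a)(f^3-b)-ab$ with $f^1,f^2,f^3$ in the class $\mathbb D_\infty$: the only nonlinear input is that $\mathbb D_\infty$ is an algebra (Savar\'e's Lemma 3.2), the pointwise choices $a=f^2(x)$, $b=f^3(x)$ leave a quadratic in the single scalar $\lambda$, and its discriminant yields the three-function inequality \eqref{eq2.8}, which gives \eqref{eq2.5} upon taking $f^1=f^2=f^3=f$. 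Your Bakry-style composition $\phi\circ f$ with the two pointwise parameters $(\phi'(f(x)),\phi''(f(x)))$ produces, after expanding $\gamma_2(\phi\circ f)$, exactly the same quadratic form and the same discriminant $\big(\tfrac12\la\nabla f,\nabla|\nabla f|^2\ra-\tfrac1N\Delta f\,|\nabla f|^2\big)^2\ls\big(1-\tfrac1N\big)|\nabla f|^4\big(\gamma_2(f)-K|\nabla f|^2-\tfrac1N(\Delta f)^2\big)$, and for quadratic $\phi$ your test function is literally the paper's $\Phi$ with equal entries, so the two routes are essentially equivalent in substance. The composition route does require extra bookkeeping that the polynomial route avoids: you must verify $\phi\circ f\in\mathbb D_\infty$ (in particular $\Delta(\phi\circ f)=\phi'(f)\Delta f+\phi''(f)|\nabla f|^2\in W^{1,2}(X)$, which presupposes the regularity of step (a) and effectively forces $\phi$ to be polynomial or $C^{2,1}$ rather than merely $C^{1,1}_{\rm loc}$), you must expand $\Delta^*\big((\phi'\circ f)^2|\nabla f|^2\big)$ via Lemma \ref{lem2.1} while tracking the singular part, and the a.e.-in-$x$, all-$(a,b)$ quantifier exchange cannot be taken verbatim from \cite{sa14} as you suggest, since Savar\'e's argument is phrased for the polynomial class rather than for compositions; you would instead restrict to a countable family of rational quadratic $\phi$ and use continuity of the quadratic form in the parameters. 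With these details supplied your argument is a valid alternative; what the paper's formulation buys in addition is the mixed Hessian-type inequality \eqref{eq2.8} for three different functions at no extra cost.
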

\begin{proof} In the case of $N=\infty$, the lemma is Theorem 3.4 in \cite{sa14}.

In the case of $N<\infty$,
the same argument in the proof of Lemma 3.2 in \cite{sa14} implies (\ref{eq2.4}) (See also
Theorem 2.7 of \cite{gam14}). The last inequality (\ref{eq2.5}) can be proved by a modification of the argument in the proof of Theorem 3.4 in \cite{sa14}. For completeness, we present a detailed proof as follows.

Fixed any three functions   $f^1,f^2,f^3$ in
$$\mathbb D_\infty:=\big\{ f\in {\mathscr D}(\Delta_{\mathscr E})\cap \Lip(X)\cap L^\infty(X)|\ \Delta f\in W^{1,2}(X)\big\},$$
we choose the polynomial $\Phi:\mathbb R^3\to \mathbb R$ defined by (the same in \cite{sa14})
$$\Phi(\mathbf{ f}):=\lambda f^1+(f^2-a)(f^3-b)-ab,\quad \lambda,a,b\in\mathbb R,$$
where $\mathbf{ f}=(f^1,f^2,f^3).$
Now we have $\Phi(\mathbf{f})\in\mathbb D_\infty$ by  \cite[Lemma 3.2]{sa14}, and
$$\gamma_2\big(\Phi(\mathbf{f})\big)\gs K|\nabla \Phi(\mathbf{f})|^2+\nu\cdot\big(\Delta\Phi(\mathbf{f})\big)^2,\quad \mu-a.e.\ {\rm in}\ X,$$
where $\nu:=1/N$, and $\gamma_2(f):=\frac{1}{2}\Delta^R(|\nabla f|^2)-\ip{\nabla f}{\nabla \Delta f}$.
According to \cite{sa14}, we know that for $\mu$-almost every $x\in X$ the inequality holds for every $(\lambda,a,b)\in\mathbb R^3$. Up to a $\mu$-negligible set, for every $x\in X$, we can take $a:=f^2(x)$ and $b:=f^3(x)$.
Thus, we obtain
\begin{equation}\label{eq++}
\begin{split}
\lambda^2\cdot&\gamma_2(f^1)+4\lambda\cdot H[f^1](f^2,f^3)+2\Big(|\nabla f^2|^2\cdot|\nabla f^3|^2+\ip{\nabla f^2}{\nabla f^3}^2 \Big)\\
&\gs K\lambda^2\cdot |\nabla f^1|^2+ \nu\cdot\Big(\lambda\cdot \Delta f^1+2\ip{\nabla f^2}{\nabla f^3} \Big)^2\\
&= K\lambda^2\cdot |\nabla f^1|^2+ \nu\cdot\Big(\lambda^2\cdot(\Delta f^1)^2+4\cdot\lambda\cdot\Delta f^1\ip{\nabla f^2}{\nabla f^3}+4 \cdot\ip{\nabla f^2}{\nabla f^3}^2\Big),
\end{split}
\end{equation}
where $ H[f](g,h):=\frac{1}{2}\cdot [\ip{\nabla g}{\nabla\ip{\nabla f}{\nabla h}}+ \ip{\nabla h}{\nabla\ip{\nabla f}{\nabla g}}  -\ip{\nabla f}{\nabla\ip{\nabla g}{\nabla h}}]$. We can rewrite inequality (\ref{eq++}) as in the following form
 \begin{equation*}
\begin{split}
0&\ls \lambda^2\cdot\Big(\gamma_2(f^1)-K\cdot |\nabla f^1|^2-\nu\cdot(\Delta f^1)^2\Big)+\lambda\cdot\Big(4 H[f^1](f^2,f^3)-4\nu\cdot\Delta f^1\ip{\nabla f^2}{\nabla f^3}\Big)\\
&\quad+2\Big(|\nabla f^2|^2\cdot|\nabla f^3|^2+\ip{\nabla f^2}{\nabla f^3}^2 \Big)-4\nu \cdot\ip{\nabla f^2}{\nabla f^3}^2\\
&:=\mathcal A\cdot \lambda^2+\mathcal B\cdot\lambda +\mathcal C.
\end{split}
\end{equation*}
Since $\lambda$ is arbitrary, the coefficients $\mathcal A,\mathcal B$ and $\mathcal C$ satisfy $\mathcal B^2\ls 4\mathcal A\mathcal C$. I.e.,
\begin{equation}\label{eq2.7}
\begin{split}
16&\cdot\Big(H[f^1](f^2,f^3)-\nu\cdot \Delta f^1\ip{\nabla f^2}{\nabla f^3}\Big)^2\\
&\ls 4\Big(\gamma_2(f^1)-K|\nabla f^1|^2-\nu(\Delta f^1)^2\Big)\cdot  \bigg[2\Big(|\nabla f^2|^2\cdot|\nabla f^3|^2+\ip{\nabla f^2}{\nabla f^3}^2 \Big)-4\nu \!\cdot\!\ip{\nabla f^2}{\nabla f^3}^2 \bigg].
\end{split}
\end{equation}
By using
$$ |\nabla f^2|^2\cdot|\nabla f^3|^2+\ip{\nabla f^2}{\nabla f^3}^2\ls 2\cdot|\nabla f^2|^2\cdot|\nabla f^3|^2 $$
and noting that $\mathcal A\gs0$ (from (\ref{eq2.4})), the inequality (\ref{eq2.7}) implies that
\begin{equation}\label{eq2.8}
\begin{split}
\Big(H[f^1]&(f^2,f^3)-\nu\cdot \Delta f^1\ip{\nabla f^2}{\nabla f^3}\Big)^2\\
&\ls \Big(\gamma_2(f^1)-K|\nabla f^1|^2-\nu(\Delta f^1)^2\Big)\cdot  \bigg[|\nabla f^2|^2\cdot|\nabla f^3|^2-\nu \!\cdot\!\ip{\nabla f^2}{\nabla f^3}^2 \bigg].
\end{split}
\end{equation}
Now we take $f^1=f^2=f^3=f$ and conclude that
$$\Big(\frac{1}{2}\ip{\nabla f}{\nabla |\nabla f|^2}-\nu\cdot\Delta f\cdot|\nabla f|^2\Big)^2\ls (1-\nu)\cdot |\nabla f|^4\cdot\Big(\gamma_2(f)-K|\nabla f|^2-\nu(\Delta f)^2\Big).$$
This implies (\ref{eq2.5})  where $|\nabla f|\not=0.$ The proof is complete.
\end{proof}
\begin{rem}\label{rem1}
 If $N=\infty$, we have $1/N=0$ and $N/(N-1)=1$, and that the inequality $(\ref{eq2.5})$ is
\begin{equation}\label{eq2.9}
\frac{1}{2} \Delta^{R}(|\nabla f|^2)\gs  \la\nabla\Delta f,\nabla f \ra+K|\nabla f|^2+\Big(\frac{\la \nabla f,\nabla |\nabla f|^2\ra}{2|\nabla f|^2}\Big)^2.
\end{equation}
It is proved in \cite{sa14} that
\begin{equation*}
\frac{1}{2} \Delta^{R}(|\nabla f|^2)\gs  \la\nabla\Delta f,\nabla f \ra+K|\nabla f|^2+\frac{|\nabla |\nabla f|^2|^2}{4\cdot|\nabla f|^2}.
\end{equation*}
 This is stronger than $(\ref{eq2.9})$, since  $|\la\nabla f,\nabla |\nabla f|^2\ra|\ls |\nabla f|\cdot|\nabla |\nabla f|^2|$.
\end{rem}

\section{Hamilton's gradient estimates}
\hskip\parindent
We will prove the main Theorem \ref{main} in this section. Let us begin from the following Lemma.

\begin{lem}\label{lem3.1}
Let $(X,d,\mu)$ be  a   $RCD^*(K, \infty)$ space with some $K\ls0$.
Assume that
$u(x,t): X\times [0,T]\to\rr$, $T\ls \infty$, is  a solution of heat equation $\partial_tu=\Delta u$ with initial value $u_0(x)\in L^2(X)\cap L^\infty(X)$ such that $u_0\gs0$.
For any $\epsilon>0$, we have $\frac{|\nabla u|^2}{u+\epsilon}\in \mathscr D(\Delta^*)$ for all $t\in (0,T]$
and
\begin{equation}\label{eq3.1}
\Delta^*\Big(\frac{|\nabla u|^2}{u+\epsilon}\Big)- \frac{\partial}{\partial t}\Big(\frac{|\nabla u|^2}{u+\epsilon}\Big)\cdot\mu\gs
2K\frac{|\nabla u|^2}{u+\epsilon}\cdot\mu.
\end{equation}
\end{lem}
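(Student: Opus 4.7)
The plan is to compute the measure-valued Laplacian of $v := |\nabla u|^2/(u+\epsilon)$ via the chain and Leibniz rules of Lemma 2.1, compare with the pointwise time derivative $\partial_t v$, and apply the refined Bochner inequality (2.9) so that the cross term intrinsic to the quotient is absorbed into a perfect square.

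First I would secure enough regularity for $u(\cdot,t)$. Since $u_0\in L^2(X)\cap L^\infty(X)$, the $L^p$-contractivity of the heat flow together with Lemma 2.2 give $u(\cdot,t)\in \mathrm{Lip}(X)\cap L^\infty(X)\cap \mathscr{D}(\Delta_{\mathscr{E}})$ for every $t>0$, and the identity $\Delta u(\cdot,t)=H_{t/2}\Delta H_{t/2}u_0$ places $\Delta u(\cdot,t)\in W^{1,2}(X)$. Hence $u(\cdot,t)$ lies in the class $\mathbb{D}_\infty$ of Lemma 2.3, so $|\nabla u|^2\in W^{1,2}(X)\cap L^\infty(X)\cap \mathscr{D}(\Delta^*)$, its singular part is nonnegative, and the refined Bochner inequality (2.9) holds. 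Next, since $u+\epsilon$ takes values in the compact interval $[\epsilon,\|u_0\|_\infty+\epsilon]$ and lies in $\mathrm{Lip}(X)\cap \mathscr{D}(\Delta)$, extending $s\mapsto 1/s$ to a $C^{1,1}(\mathbb{R})$-function and invoking the Chain rule (Lemma 2.1(i)) gives $(u+\epsilon)^{-1}\in \mathrm{Lip}(X)\cap \mathscr{D}(\Delta)$ with density $-\Delta u/(u+\epsilon)^2+2|\nabla u|^2/(u+\epsilon)^3\in L^2(X)$. The Leibniz rule in case (c) with $g_1=(u+\epsilon)^{-1}$ and $g_2=|\nabla u|^2\in W^{1,2}(X)$ then places $v\in \mathscr{D}(\Delta^*)$; its singular part equals $(u+\epsilon)^{-1}\Delta^S(|\nabla u|^2)\geq 0$, so it suffices to verify (3.1) on the absolutely continuous part.

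Using $\partial_t u=\Delta u$ I would obtain $\partial_t v=2\langle\nabla u,\nabla\Delta u\rangle/(u+\epsilon)-|\nabla u|^2\Delta u/(u+\epsilon)^2$; the $\Delta u$-term cancels the analogous contribution in $\Delta^R v$, leaving
\[
\Delta^R v-\partial_t v\;=\;\frac{\Delta^R|\nabla u|^2-2\langle\nabla u,\nabla\Delta u\rangle}{u+\epsilon}+\frac{2|\nabla u|^4}{(u+\epsilon)^3}-\frac{2\langle\nabla u,\nabla|\nabla u|^2\rangle}{(u+\epsilon)^2}.
\]
On $\{|\nabla u|\neq 0\}$, the refined Bochner inequality (2.9) bounds the first quotient from below by $2K|\nabla u|^2/(u+\epsilon)+\langle\nabla u,\nabla|\nabla u|^2\rangle^2/(2|\nabla u|^4(u+\epsilon))$, and a direct calculation shows that the remaining three terms assemble into the perfect square
\[
\frac{1}{u+\epsilon}\Bigl(\frac{\sqrt{2}\,|\nabla u|^2}{u+\epsilon}-\frac{\langle\nabla u,\nabla|\nabla u|^2\rangle}{\sqrt{2}\,|\nabla u|^2}\Bigr)^2\;\geq\;0,
\]
yielding $\Delta^R v-\partial_t v\geq 2Kv$. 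On $\{|\nabla u|=0\}$, locality of minimal weak upper gradients forces $\nabla|\nabla u|^2=0$ almost everywhere, so $v=\partial_t v=0$, while the basic Bochner inequality (2.4) supplies $\Delta^R v\geq 0=2Kv$. Combined with the nonnegativity of the singular part, this gives (3.1).

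The crux is the algebraic observation that the cross term $-2\langle\nabla u,\nabla|\nabla u|^2\rangle/(u+\epsilon)^2$ produced by the quotient structure is exactly the one consumed by the Hessian-squared refinement in (2.9) via a completion of squares; the weaker inequality (2.4) alone is not sufficient here, which is why the $N=\infty$ improvement of Lemma 2.3 is essential. A subsidiary technical point is justifying the pointwise formula for $\partial_t v$ at $\mu$-a.e. $x$, which is delivered by the $W^{1,2}$-regularity of $\Delta u$ obtained in the regularity step.
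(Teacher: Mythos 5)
Your proposal is correct and follows essentially the same route as the paper: the same regularity bootstrap placing $u(\cdot,t)$ in the class of Lemma \ref{lem2.3}, the chain rule for $(u+\epsilon)^{-1}$ and the Leibniz rule (case (c)) to get $\frac{|\nabla u|^2}{u+\epsilon}\in\mathscr D(\Delta^*)$ with nonnegative singular part, and then the refined Bochner inequality \eqref{eq2.9} on $\{|\nabla u|\neq0\}$ together with the identical completion of squares (your square is just the paper's $\mathscr A$ multiplied by $2/(u+\epsilon)$), with the set $\{|\nabla u|=0\}$ handled via \eqref{eq2.4} and Cauchy--Schwarz as in the paper.
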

\begin{proof}Fix any $0<t<T$. By using Lemma \ref{lem2.2}, we get that $u(\cdot,t)\in {\rm Lip}(X)$ for all $t>0$.
Since $u_0\in L^2(X)\cap L^\infty(X)$, we get $u\in L^\infty(X)\cap \mathscr D(\Delta_{\mathscr E})\cap \Lip(X)$ and $\Delta u\in W^{1,2}(X)$ for all $t>0$.

Fix any $\epsilon>0$ and any $\ell>0$, we first claim that $(u+\epsilon)^{-\ell}\in \mathscr D(\Delta)\cap {\rm Lip}(X)$. Since $u\in  \Lip(X)$ and $u+\epsilon\gs \epsilon$, we have $(u+\epsilon)^{-\ell}\in{\rm Lip}(X)$.
By using Lemma \ref{lem2.1}(i), we have $(u+\epsilon)^{-\ell}\in \mathscr D(\Delta^*)$ and
\begin{equation}\label{eq3.2}
\Delta^*(u+\epsilon)^{-\ell}=-\ell(u+\epsilon)^{-\ell-1}\Delta^*u+\ell(\ell+1)\cdot(u+\epsilon)^{-\ell-2}|\nabla u|^2\cdot \mu.
\end{equation}
Since $\Delta^* u=\Delta u\cdot \mu$ with $\Delta u\in L^2(X)$, $|\nabla u|\in L^\infty(X)$
and $(u+\epsilon)^{-\ell}\in L^\infty(X)$, we have $(u+\epsilon)^{-\ell}\in \mathscr D(\Delta)$.

By using $u\in L^\infty(X)\cap \mathscr D(\Delta_{\mathscr E})\cap \Lip(X)$ and
$\Delta u\in W^{1,2}(X)$ and Lemma \ref{lem2.3}, we have $|\nabla u|^2\in \mathscr D(\Delta^*)\cap W^{1,2}(X).$
According to Lemma \ref{lem2.1}(ii) (with condition (c)), we conclude that $\frac{|\nabla u|^2}{(u+\epsilon)^\ell}\in \mathscr D(\Delta^*)$ for all $t>0$ and
$$\Delta^*\Big(\frac{|\nabla u|^2}{(u+\epsilon)^\ell}\Big)= \frac{\Delta^*(|\nabla u|^2)}{(u+\epsilon)^\ell}+|\nabla u|^2\cdot \Delta^* [(u+\epsilon)^{-\ell}]-2\ell\frac{\la\nabla u,\nabla |\nabla u|^2\ra}{(u+\epsilon)^{\ell+1}}\cdot\mu.$$
By using Lemma \ref{lem2.3} and (\ref{eq3.2}), we have
\begin{equation}\label{eq3.3}
\Delta^S\Big(\frac{|\nabla u|^2}{(u+\epsilon)^\ell}\Big)= \frac{\Delta^S(|\nabla u|^2)}{(u+\epsilon)^\ell}\gs0
\end{equation}
and
\begin{equation}\label{eq3.4}
\Delta^R\Big(\frac{|\nabla u|^2}{(u+\epsilon)^\ell}\Big)= \frac{\Delta^R(|\nabla u|^2)}{(u+\epsilon)^\ell}-\ell\frac{|\nabla u|^2\Delta u}{(u+\epsilon)^{\ell+1}}+ \ell(\ell+1)\cdot\frac{|\nabla u|^4 }{(u+\epsilon)^{\ell+2}}-2\ell\frac{\la\nabla u,\nabla |\nabla u|^2\ra}{(u+\epsilon)^{\ell+1}}.
\end{equation}
Note that
$$\frac{\partial}{\partial t}\Big(\frac{|\nabla u|^2}{(u+\epsilon)^\ell}\Big)=\frac{\partial_t|\nabla u|^2}{(u+\epsilon)^\ell}-\ell\frac{|\nabla u|^2\partial_t u}{(u+\epsilon)^{\ell+1}}.$$
Using  $\Delta u=\partial_t u $, $\partial_t|\nabla u|^2:=\frac{\partial}{\partial t}|\nabla u|^2=2\ip{\nabla u}{\nabla \Delta u}$ and (\ref{eq3.4}), we have, for $\mu$-a.e. $\ x\in X$,
\begin{equation}\label{eq3.5}
\Delta^R\Big(\frac{|\nabla u|^2}{(u+\epsilon)^\ell}\Big)- \frac{\partial}{\partial t}\Big(\frac{|\nabla u|^2}{(u+\epsilon)^\ell}\Big)= \frac{\Delta^R(|\nabla u|^2)-2\ip{\nabla u}{\nabla \Delta u}}{(u+\epsilon)^\ell}+ \ell(\ell+1)\frac{|\nabla u|^4 }{(u+\epsilon)^{\ell+2}}-2\ell\frac{\la\nabla u,\nabla |\nabla u|^2\ra}{(u+\epsilon)^{\ell+1}}.
\end{equation}

Next, we take $\ell=1$ in this proof.

At the points where $|\nabla u|=0$, using (\ref{eq2.4}) and (\ref{eq3.5}), we have
$$\Delta^R\Big(\frac{|\nabla u|^2}{u+\epsilon}\Big)- \frac{\partial}{\partial t}\Big(\frac{|\nabla u|^2}{u+\epsilon}\Big)\gs 0,$$
where we have used $|\la\nabla u,\nabla |\nabla u|^2\ra|\ls |\nabla u|\cdot|\nabla|\nabla u|^2 |=0$.

At the points where $|\nabla u|\not=0$, by using  (\ref{eq3.5}) for $\ell=1$, we have
\begin{equation}\label{eq3.6}
\begin{split}
\Delta^R\Big(\frac{|\nabla u|^2}{u+\epsilon}\Big)- \frac{\partial}{\partial t}\Big(\frac{|\nabla u|^2}{u+\epsilon}\Big)&\ =\ \frac{2}{u+\epsilon}\bigg(
\frac{\Delta^R(|\nabla u|^2)-2\ip{\nabla u}{\nabla \Delta u}}{2}+ \frac{|\nabla u|^4 }{(u+\epsilon)^{2}}-\frac{\la\nabla u,\nabla |\nabla u|^2\ra}{u+\epsilon}\bigg).
\end{split}
\end{equation}
By using (\ref{eq2.5}) with $N=\infty$ to the function $u$ (see also Remark \ref{rem1}), we get
$$ \frac{1}{2}\Delta^{R}(|\nabla u|^2)\gs  \la\nabla\Delta u,\nabla u \ra+K|\nabla u|^2+\Big(\frac{\la \nabla u,\nabla |\nabla u|^2\ra}{2|\nabla u|^2}\Big)^2.$$
By combining this and the equation (\ref{eq3.6}), we have
\begin{equation*}
\Delta^R\Big(\frac{|\nabla u|^2}{u+\epsilon}\Big)- \frac{\partial}{\partial t}\Big(\frac{|\nabla u|^2}{u+\epsilon}\Big)\gs\frac{2}{u+\epsilon}  \Big(K|\nabla u|^2+\mathscr A\Big)
\end{equation*}
  where
 $$\mathscr A:=\bigg(\frac{\la\nabla u,\nabla |\nabla u|^2\ra}{2|\nabla u|^2}\bigg)^2+\frac{|\nabla u|^4}{(u+\epsilon)^2}-\frac{\la\nabla u,\nabla |\nabla u|^2\ra}{u+\epsilon}=\bigg(\frac{\la\nabla u,\nabla |\nabla u|^2\ra}{2|\nabla u|^2}-\frac{|\nabla u|^2}{u+\epsilon}\bigg)^2\gs0.$$
 Hence, we have, for $\mu$-a.e. $\ x\in X$,
\begin{equation}\label{eq3.7}
\Delta^R\Big(\frac{|\nabla u|^2}{u+\epsilon}\Big)- \frac{\partial}{\partial t}\Big(\frac{|\nabla u|^2}{u+\epsilon}\Big)\gs  \frac{2K|\nabla u|^2}{u+\epsilon}.
\end{equation}
The combination of (\ref{eq3.3}) and (\ref{eq3.7}) implies the desired (\ref{eq3.1}). The proof is complete.
\end{proof}
\noindent \textbf{Remark.}  In the above proof, it is crucial that the singular part is nonnegative.

\begin{lem}\label{lem3.2}
Let $(X,d,\mu)$ be  a   $RCD^*(K,\infty)$ space with some $K\ls0$.
Assume that
$u(x,t): X\times [0,T]\to\rr$, $T\ls \infty$, is  a solution of heat equation
$\partial_tu=\Delta u$ with initial value $u_0(x)\in L^2(X)\cap L^\infty(X)$ such that $u_0\gs0$.
For any $\epsilon>0$, we set
$$P_\epsilon(x,t):=\varphi(t)\frac{|\nabla u|^2}{u+\epsilon}+(u+\epsilon)\log (u+\epsilon),$$
where $\varphi(t)=\frac{t}{1-2Kt}.$ Then we have $P_\epsilon(\cdot,t)\in \mathscr D(\Delta^*)$ for all $t\in (0,T]$ and
$$\frac{\partial}{\partial t}P_\epsilon(x,t)\cdot\mu\ls \Delta^* P_\epsilon(x,t).$$
\end{lem}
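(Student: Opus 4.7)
My plan is to split $P_\epsilon$ into the two natural summands
$$Q_\epsilon := \frac{|\nabla u|^2}{u+\epsilon}, \qquad R_\epsilon := (u+\epsilon)\log(u+\epsilon),$$
handle each via a result already in hand, and finally choose $\varphi$ so that the time-derivative terms are absorbed.

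For $Q_\epsilon$, Lemma~\ref{lem3.1} already furnishes $Q_\epsilon(\cdot,t)\in\mathscr D(\Delta^*)$ and $\Delta^*Q_\epsilon-\partial_tQ_\epsilon\cdot\mu\ge 2K\,Q_\epsilon\cdot\mu$. For $R_\epsilon$, I would apply the chain rule in Lemma~\ref{lem2.1}(i) with $g=u\in\mathscr D(\Delta)\cap L^\infty(X)$ and the function $\phi(s)=(s+\epsilon)\log(s+\epsilon)$; since $u$ stays in a bounded interval by the maximum principle / $L^\infty$-contractivity of $H_t$, I may replace $\phi$ outside the essential range of $u$ by a $C^{1,1}$ extension, so $\phi\in C^{1,1}_{\rm loc}(\rr)$ is legitimate. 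Using $\phi'(s)=\log(s+\epsilon)+1$, $\phi''(s)=(s+\epsilon)^{-1}$ and $\Delta^*u=\Delta u\cdot\mu$, the chain rule gives
$$\Delta^*R_\epsilon=\bigl(\log(u+\epsilon)+1\bigr)\Delta u\cdot\mu+Q_\epsilon\cdot\mu.$$
On the other hand $\partial_tR_\epsilon=\phi'(u)\,\partial_tu=\bigl(\log(u+\epsilon)+1\bigr)\Delta u$, so
$$\Delta^*R_\epsilon-\partial_tR_\epsilon\cdot\mu=Q_\epsilon\cdot\mu.$$

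Since $\Delta^*$ is linear, $P_\epsilon(\cdot,t)=\varphi(t)Q_\epsilon+R_\epsilon\in\mathscr D(\Delta^*)$ for every $t\in(0,T]$. Combining the two estimates and accounting for the extra $\varphi'(t)Q_\epsilon$ arising in $\partial_tP_\epsilon$ yields
$$\Delta^*P_\epsilon-\partial_tP_\epsilon\cdot\mu\ge\bigl(2K\varphi(t)+1-\varphi'(t)\bigr)Q_\epsilon\cdot\mu.$$
A direct computation with $\varphi(t)=t/(1-2Kt)$ gives $\varphi'(t)=(1-2Kt)^{-2}$ and $2K\varphi(t)+1=(1-2Kt)^{-1}$, so
$$2K\varphi(t)+1-\varphi'(t)=\frac{-2Kt}{(1-2Kt)^2}\ge 0$$
since $K\le 0$ and $t\ge 0$; together with $Q_\epsilon\ge 0$ this delivers the desired inequality.

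The only genuine subtlety is verifying the chain-rule hypothesis on an unbounded function $\phi$: I would address this by the truncation argument above, using that $0\le u\le \|u_0\|_\infty$ a.e.\ so $\phi(u)$ is unaffected. Everything else is bookkeeping: the nonnegativity of the singular part of $\Delta^*Q_\epsilon$ (already proved in Lemma~\ref{lem3.1}) ensures the inequality transfers to $\Delta^*P_\epsilon$ rather than merely to its absolutely continuous part.
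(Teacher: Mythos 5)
Your proposal is correct and follows essentially the same route as the paper: Lemma~\ref{lem3.1} for the term $|\nabla u|^2/(u+\epsilon)$, the chain rule of Lemma~\ref{lem2.1}(i) applied to $(u+\epsilon)\log(u+\epsilon)$ giving exactly the paper's identities \eqref{eq3.9}--\eqref{eq3.10}, and the same computation $1-\varphi'+2K\varphi=-2Kt/(1-2Kt)^2\gs0$ for $K\ls0$. Your extra care in extending $\phi(s)=(s+\epsilon)\log(s+\epsilon)$ to a $C^{1,1}_{\rm loc}(\rr)$ function off the essential range of $u$ is a point the paper leaves implicit, and is a welcome clarification rather than a deviation.
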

\begin{proof}
From Lemma \ref{lem3.1}, we have $\frac{|\nabla u|^2}{u+\epsilon}\in \mathscr D(\Delta^*)$ for all $t>0$
and
\begin{equation}\label{eq3.8}
\Delta^*\Big(\frac{|\nabla u|^2}{u+\epsilon}\Big)- \frac{\partial}{\partial t}\Big(\frac{|\nabla u|^2}{u+\epsilon}\Big)\cdot\mu\gs
2K\frac{|\nabla u|^2}{u+\epsilon}\cdot\mu.
\end{equation}
Since $u\in  \Lip(X)$ and $u+\epsilon\gs \epsilon$, we have $(u+\epsilon)\log(u+\epsilon)\in{\rm Lip}(X)$.
By using Lemma \ref{lem2.1}(i), we have $(u+\epsilon)\log(u+\epsilon)\in \mathscr D(\Delta^*)$ and
\begin{equation}\label{eq3.9}
\begin{split}
\Delta^*\big((u+\epsilon)\log(u+\epsilon)\big)&=\big(1+\log(u+\epsilon)\big)\Delta^*u+ \frac{|\nabla u|^2}{u+\epsilon}\cdot \mu\\
&=\Big[\big(1+\log(u+\epsilon)\big)\Delta u+ \frac{|\nabla u|^2}{u+\epsilon}\Big]\cdot \mu,
\end{split}
\end{equation}
where we have used $\Delta^* u=\Delta u\cdot \mu$ with $\Delta u\in L^2(X)$. Hence we have
\begin{equation}\label{eq3.10}
\Delta^*\big((u+\epsilon)\log(u+\epsilon)\big)- \frac{\partial}{\partial t}(u+\epsilon)\log(u+\epsilon)\cdot\mu= \frac{|\nabla u|^2}{u+\epsilon}\cdot \mu,
\end{equation}
since $\partial_tu=\Delta u.$ By applying (\ref{eq3.9}) and (\ref{eq3.10}), we have
$$\Delta^* P_\epsilon(x,t)-\frac{\partial}{\partial t}P_\epsilon(x,t)\cdot\mu\gs\Big(1-\varphi'+2K\varphi\Big)\cdot \frac{|\nabla u|^2}{u+\epsilon}\cdot\mu.$$
Together with $ 1-\varphi'(t)+2K\varphi(t)=-2Kt/(1-2Kt)^2\gs0$ (since $K\ls0$), we get $\Delta^* P_\epsilon(x,t)-\frac{\partial}{\partial t}P_\epsilon(x,t)\cdot\mu\gs0$.
The proof is finished.
\end{proof}



We also need the the following lemma, which can be found \cite{g87} or \cite[Lemma 2 on Page 165]{sy94}.
\begin{lem}\label{lem3.3}
Let a function $v(x,t)$ on $X\times[0,\infty)$ be a weak sub-solution of the heat equation and $v(x,0)=v_0(x)$
, i.e., $v(x,t)$ satisfies $\partial_tv\ls \Delta v$ in the sense of distribution. Namely, the following inequality
$$-\int_0^\infty\int_X\ip{\nabla v}{\nabla \varphi}d\mu dt\gs \int_0^\infty\int_X \partial_tv\cdot\varphi d\mu dt$$
 holds for any nonnegative function $\varphi(x,t)\in {\rm Lip}_0(X\times[0,\infty)).$

Let a function $g(x,t)$ be of class $C^1$ in $t$ and Lipschitz continuous in $x$, and satisfy
$$g\ls 0,\qquad \frac{\partial}{\partial t}g+\frac{1}{4}|\nabla g|^2=0.$$
For any $T>0$, if we assume
$$\int_0^T\int_Xe^{\frac{g(x,t)}{4}}\cdot v^2(x,t)d\mu dt<\infty,$$
then, for any $R>0$ and $x_0\in X$, we have
\begin{equation}\label{eq4.6}
\int_{B(x_0,R)}e^{\frac{g(x,T)}{4}}v^2(x,T)d\mu  \ls   \int_{X}e^{\frac{g(x,0)}{4}}v_0^2(x)d\mu.
\end{equation}
  \end{lem}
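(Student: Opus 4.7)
The plan is a weighted $L^2$ energy estimate of Davies--Gaffney / Grigor'yan type. Introduce a spatial cutoff $\eta\in\Lip_0(X)$ with $\eta\equiv 1$ on $B(x_0,R)$, $\supp\eta\subset B(x_0,R')$ for some $R'>R$, and $|\nabla\eta|\le 2/(R'-R)$. The key step is to use
\[
\varphi(x,t):=\eta^2(x)\,e^{g(x,t)/4}\,v(x,t)
\]
as a test function in the distributional inequality $\partial_tv\le\Delta v$ on $X\times[0,T]$. Since $v$ is only a weak sub-solution, this substitution should be made rigorous by first replacing $v$ by its Steklov average $v^h(\cdot,t):=h^{-1}\int_t^{t+h}v(\cdot,s)\,ds$ (or by convolving in $t$ with a smooth kernel), carrying out the computation for $v^h$, and then passing to the limit $h\to 0^+$.

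Integrating by parts in $t$ on $\int_0^T\!\!\int \eta^2e^{g/4}v\,\partial_tv\,d\mu\,dt=\tfrac12\int_0^T\!\!\int\eta^2e^{g/4}\,\partial_t(v^2)\,d\mu\,dt$ produces the desired boundary difference $\tfrac12\int\eta^2e^{g(\cdot,T)/4}v^2(\cdot,T)\,d\mu-\tfrac12\int\eta^2e^{g(\cdot,0)/4}v_0^2\,d\mu$ together with a bulk term $-\tfrac18\int_0^T\!\!\int\eta^2(\partial_tg)e^{g/4}v^2$ from differentiating the weight. On the spatial side, expanding $\nabla(\eta^2e^{g/4}v)=2\eta e^{g/4}v\nabla\eta+\tfrac14\eta^2e^{g/4}v\nabla g+\eta^2e^{g/4}\nabla v$ yields the good negative term $-\int\!\!\int\eta^2e^{g/4}|\nabla v|^2$ plus two cross-terms involving $\langle\nabla v,\nabla g\rangle$ and $\langle\nabla v,\nabla\eta\rangle$.

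Young's inequality with carefully tuned coefficients,
\[
\tfrac14|v\langle\nabla v,\nabla g\rangle|\le\tfrac12|\nabla v|^2+\tfrac{1}{32}v^2|\nabla g|^2,\qquad 2\eta|v\langle\nabla v,\nabla\eta\rangle|\le\tfrac14\eta^2|\nabla v|^2+4v^2|\nabla\eta|^2,
\]
lets the $|\nabla v|^2$-contributions be absorbed into the good term (total coefficient $-1+\tfrac12+\tfrac14=-\tfrac14<0$), and is arranged so that the surviving $v^2$-coefficients satisfy the decisive identity
\[
\tfrac18\partial_tg+\tfrac1{32}|\nabla g|^2=\tfrac18\bigl(\partial_tg+\tfrac14|\nabla g|^2\bigr)=0
\]
forced by the hypothesis on $g$. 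All $|\nabla g|$-dependent and $\partial_tg$-dependent terms therefore cancel identically, leaving
\[
\int_X\eta^2 e^{g(\cdot,T)/4}v^2(\cdot,T)\,d\mu\le\int_X\eta^2 e^{g(\cdot,0)/4}v_0^2\,d\mu+8\int_0^T\!\!\int_X e^{g/4}v^2|\nabla\eta|^2\,d\mu\,dt.
\]
Letting $R'\to\infty$, the error integral vanishes because $|\nabla\eta|^2\le 4/(R'-R)^2\to 0$ and the integrability assumption $\int_0^T\!\!\int_X e^{g/4}v^2<\infty$ controls the rest by dominated convergence, yielding \eqref{eq4.6}.

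The main technical obstacle will be the rigorous justification of the above manipulations when $v$ is merely a distributional sub-solution; the Steklov-averaging or heat-flow smoothing step is standard but must be executed with care, using that a weak sub-solution belongs to $L^2_{\loc}([0,T],W^{1,2}_{\loc}(X))$ so that both the spatial chain rule for $\nabla(\eta^2e^{g/4}v)$ (via the Leibniz rule in Lemma \ref{lem2.1}) and the temporal integration by parts pass to the limit as the regularization parameter tends to zero.
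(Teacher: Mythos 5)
Your proposal is correct and follows essentially the same route as the paper, which does not write out a proof but simply cites the classical Grigor'yan/Schoen--Yau weighted $L^2$ energy argument (\cite[pp.~166--167]{sy94} with $\epsilon=4$): testing with $\eta^2 e^{g/4}v$, absorbing the cross terms by Young's inequality so that $\tfrac18\partial_t g+\tfrac1{32}|\nabla g|^2=0$ kills the weight terms, and letting the cutoff radius tend to infinity using the assumed finiteness of $\int_0^T\int_X e^{g/4}v^2$. Your coefficient bookkeeping (absorption constant $-1+\tfrac12+\tfrac14=-\tfrac14$, error term $8\int\!\!\int e^{g/4}v^2|\nabla\eta|^2$) checks out, and the regularization caveats you flag (Steklov averaging in $t$; implicitly $v\gs 0$ so that the test function is admissible, as it is in the application to $P^+_\epsilon$) are exactly the points handled in the cited source.
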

\begin{proof}
See the proof in \cite[Page 166-167]{sy94} and take $\epsilon=4$ there.
\end{proof}

Now we are in the position to prove the main theorem.
\begin{proof}[Proof of Theorem \ref{main}]
By replacing $u$ by $u/2M$, we can assume that $0<u\ls1/2$.

\noindent\textbf{(i).} First, we consider the case where the initial data $u_0\in L^2(X)$.
For an arbitrarily fixed $\epsilon\in (0,1/2)$, set $u_\epsilon:=u+\epsilon$ and
$$ P_\epsilon(x,t):=\varphi(t)\frac{|\nabla u|^2}{u_\epsilon}+u_\epsilon\log\big(u_\epsilon\big), $$
where $\varphi(t):=t/(1-2Kt).$
By Lemma \ref{lem3.2}, we have
$$\frac{\partial}{\partial t}P_\epsilon\cdot\mu\ls \Delta^* P_\epsilon,$$
and hence $P^+_\epsilon$ is a weak sub-solution of the heat equation,
  where $P_\epsilon^+(x,t):=\max\{P_\epsilon(x,t),0\}.$

Fix a $T>0$. The volume growth condition \eqref{vol-growth} implies that there exists a small constant $\delta=\delta(c_1,c_2)>0$ such that
$$\int_Xe^{-\frac{d^2(x_0,x)}{\delta}}d\mu<\infty$$
for some $x_0\in X$.
We denote
$$g(x,t):=-\frac{d^2(x_0,x)}{T-t+\delta/4}\ls-\frac{d^2(x_0,x)}{\delta/4}.$$
Then
$$ \frac{\partial}{\partial t}g+\frac{1}{4}|\nabla g|^2=0.$$
 From Lemma \ref{lem2.2}, we have
$$P^+_\epsilon\ls \varphi(t)\frac{|\nabla u|^2}{u_\epsilon}\ls \varphi(t)\cdot\frac{C'}{t}\cdot \epsilon^{-1}
\ls  C' \epsilon^{-1}$$ for some constant $C'=C'(K,T)>0$, {where we
have used $K\ls0$ and $u,u_\epsilon\ls1$.} Hence,
\begin{equation*}
\int_0^T\!\int_Xe^{\frac{g(x,t)}{4}}(P^+_\epsilon)^2(x,t) d\mu\ls T\cdot
(C' \epsilon^{-1})^2\cdot\int_Xe^{-\frac{d^2(x_0,x)}{\delta}}d\mu<\infty.
\end{equation*}
 Now the functions $P^+_\epsilon$ and $g$ meet all of assumptions
of Lemma \ref{lem3.3}. Hence  we  conclude that
\begin{equation}\label{eq3.12}
\int_{B(x_0,R)}\big(P^+_\epsilon(x,T)\big)^2\cdot e^{g/4}d\mu\ls \int_{X}\big(P^+_\epsilon(x,0)\big)^2\cdot e^{g/4}d\mu.
\end{equation}
Notice that $P^+_\epsilon(x,0)=0$, we have obtained, from equation \eqref{eq3.12},
 $$\int_{B(x_0,R)}\big(P^+_\epsilon(x,T)\big)^2\cdot e^{g/4}d\mu=0, \qquad{\rm and\ \ then},\quad  P^+_\epsilon(x,T)=0.$$
Therefore, we have $P_\epsilon(x,T)\ls0.$ That is,
\begin{equation}\label{eq3.13}
\varphi(T)\frac{|\nabla u|^2}{u_\epsilon^2}\ls -\log u_\epsilon.
\end{equation}
Letting $\epsilon\to0^+$, we have
\begin{equation*}
\varphi(T)\frac{|\nabla u|^2}{u^2}\ls \log \Big(\frac{1}{u}\Big).
\end{equation*}
This proves the desired Hamilton estimate for the case where the initial data $u_0\in L^2(X)$.\\

\noindent\textbf{(ii).}\ \ The proof of the case where the initial data $u_0\in L^q(X)$ for some $q\in[1,\infty)$.

It is sufficient to consider the case where $q\gs2$. Indeed, if $q<2$,  the facts $u_0(x) \in L^q(X)$ and $0\ls u(x,t)\ls M$ implies that $u_0\in L^2(X).$
 Take a sequence of functions $\tilde{u}_j\in L^2(X)\cap L^\infty(X)$ such that
 $$\tilde {u}_j\overset{ L^q}{\to} u_0,\qquad \tilde {u}_j\overset{w-L^\infty}{\to} u_0 \qquad {\rm and} \qquad  0\ls \tilde{u}_j  \ls M.$$

 Fix an $\epsilon>0$ and let $u_j(x,t):=H_t\tilde{u}_j(x)$ and $u_{j,\epsilon}:=u_j+\epsilon.$
 Now we have
 $$\epsilon\ls u_{j,\epsilon}(x,t)\ls M+\epsilon, \ j=1,2,\cdots.$$

For any fixed $T>0$, $R>0$, and any Lipschitz function $0\ls f(x)\in
\Lip_0(B_R(x_0))$,  from \eqref{eq3.13}, we get
 \begin{equation}\label{eq4.9}
\int_{ B_{R}(x_0)}\varphi(T)|\nabla u_j|^2\cdot fd\mu   \ls -\int_{ B_{R}(x_0)}u_{j,\epsilon}^2\cdot\log u_{j,\epsilon}\cdot fd\mu, \ \ j=1,2,\cdots.
\end{equation}
The fact $\tilde {u}_j\overset{ L^q}{\to} u_0$  implies that
$u_j(x,T)\overset{ L^q}{\to} u(x,T)$ by $L^q$-boundedness of $H_t$.
Thus, $u_j(x,T)\to u(x,T)$ for almost all $x \in X$, up to a
subsequence. Notice that $u^2_{j,\epsilon}\cdot\log u_{j,\epsilon}$
is uniformly  bounded on $B_R(x_0)$, by dominated convergence
theorem, we have
\begin{equation}\label{eq4.10}
\lim_{j\to\infty}\int_{ B_{R}(x_0)}u_{j,\epsilon}^2\cdot\log u_{j,\epsilon}\cdot fd\mu=\int_{ B_{R}(x_0)}u_{\epsilon}^2\cdot\log u_{\epsilon}\cdot fd\mu.
\end{equation}
Notice that $q\gs2$ and $u_j(x,T)\overset{ L^q}{\to} u(x,T)$, it is clear that $u_j(x,T)\overset{ L^2(B_R(x_0))}{\to} u(x,T)$. Therefore, by the lower semi-continuity of energy and \eqref{eq4.9}-\eqref{eq4.10}, we have
  \begin{equation*}
\int_{ B_{R}(x_0)}\varphi(T)|\nabla u|^2\cdot fd\mu\ls \liminf_{j\to\infty}\int_{ B_{R}(x_0)}\varphi(T)|\nabla u_j|^2\cdot fd\mu   \ls -\int_{ B_{R}(x_0)}u_{\epsilon}^2\cdot\log u_{\epsilon}\cdot fd\mu.
\end{equation*}
By the arbitrariness of $f$ and $R$, we get
$ \varphi(T) |\nabla u|^2\ls -u_\epsilon^2\cdot\log u_\epsilon.$ Letting $\epsilon\to0^+$, we have
\begin{equation*}
\varphi(T) |\nabla u|^2 \ls u^2\cdot\log \Big(\frac{1}{u}\Big).
\end{equation*}
This is the desired Hamilton estimate.
\end{proof}

\section{Monotonicity for heat equations}
\hskip\parindent
We will prove the main Theorem \ref{thm1.3} in this section.
\begin{lem}\label{lem4.1}
Let $(X,d,\mu)$ be  a  proper $RCD^*(0, N)$ space with some $N\in[1,\infty]$.
Assume that
$u(x,t): X\times [0,T]\to\rr$, $T\ls \infty$, is  a solution of heat equation
$\partial_tu=\Delta u$ with initial value $0\le u_0(x)\in L^2(X)\cap L^\infty(X)$.
For any $\epsilon>0$, we have $w_\epsilon(x,t)\in \mathscr D(\Delta^*)$ for all $t\in (0,T]$
and
\begin{equation}\label{eq4.1}
\Delta^*w_\epsilon- \frac{\partial}{\partial t}w_\epsilon\cdot\mu\gs\bigg[-2\frac{\ip{\nabla w_\epsilon}{\nabla u}}{u_\epsilon}+\frac{2}{Nu^2_\epsilon}\Big(\Delta u-\frac{|\nabla u|^2}{u_\epsilon}\Big)^2\bigg]\cdot\mu,
\end{equation}
where $u_\epsilon:=u+\epsilon$ and
$$w_\epsilon(x,t):=\frac{|\nabla u|^2}{u^2_\epsilon}-2\frac{\partial_tu}{u_\epsilon}.$$
Here and in the sequel, if $N=\infty$, then $\frac{1}{N}=0$ and $\frac{N-1}{N}=1.$
\end{lem}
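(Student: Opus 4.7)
The plan is to emulate Lemma 3.1, replacing the function $|\nabla u|^2/(u+\epsilon)$ there by $w_\epsilon$, and crucially invoking the \emph{sharp} Bochner inequality (2.5) of Lemma 2.3 rather than the generic (2.4). First I would record the functional setup: as in the proof of Lemma 3.1, Lemma 2.2 and heat-semigroup regularity give $u(\cdot,t) \in \mathscr{D}(\Delta_{\mathscr{E}}) \cap \Lip(X) \cap L^\infty(X)$ with $\Delta u \in W^{1,2}(X)$ for each $t > 0$, and iterating the semigroup yields $\Delta u \in \mathscr{D}(\Delta_{\mathscr{E}}) \subset \mathscr{D}(\Delta^*)$. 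Lemma 2.3 then gives $|\nabla u|^2 \in \mathscr{D}(\Delta^*) \cap W^{1,2}(X)$ with $\Delta^S(|\nabla u|^2) \geq 0$; Lemma 2.1(i) gives $u_\epsilon^{-1}, u_\epsilon^{-2} \in \mathscr{D}(\Delta)$; and two applications of Lemma 2.1(ii) case (c) yield $\tfrac{|\nabla u|^2}{u_\epsilon^2}, \tfrac{\Delta u}{u_\epsilon} \in \mathscr{D}(\Delta^*)$, hence $w_\epsilon \in \mathscr{D}(\Delta^*)$. All singular contributions are inherited from $|\nabla u|^2$, so $\Delta^S w_\epsilon = u_\epsilon^{-2}\Delta^S(|\nabla u|^2) \geq 0$, and it suffices to prove the claimed inequality for the absolutely continuous parts.

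For the core computation, I would rewrite $w_\epsilon = \tfrac{|\nabla u|^2}{u_\epsilon^2} - \tfrac{2\Delta u}{u_\epsilon}$ using $\partial_t u = \Delta u$. Formula (3.5) of Lemma 3.1 specialised to $\ell = 2$ handles the first summand, while a parallel Leibniz expansion, using $\partial_t \Delta u = \Delta^2 u$, gives after cancellation
\[
\Delta^R\!\left(\tfrac{\Delta u}{u_\epsilon}\right) - \partial_t\!\left(\tfrac{\Delta u}{u_\epsilon}\right) = \tfrac{2|\nabla u|^2 \Delta u}{u_\epsilon^3} - \tfrac{2\langle \nabla \Delta u, \nabla u\rangle}{u_\epsilon^2}.
\]
Subtracting twice this identity from the first and inserting the sharp Bochner inequality (2.5) with $K = 0$ at points where $|\nabla u| \neq 0$ cancels the $\langle \nabla \Delta u, \nabla u\rangle$ terms and produces
\[
\bigl(\Delta^R w_\epsilon - \partial_t w_\epsilon\bigr) - \mathrm{RHS} \;=\; \tfrac{2}{u_\epsilon^2}\Bigl[\tfrac{N-1}{N}\gamma^2 - 2\gamma\delta + \tfrac{N}{N-1}\delta^2\Bigr],
\]
where $\gamma := |\nabla u|^2/u_\epsilon$ and $\delta := \tfrac{\langle \nabla u, \nabla |\nabla u|^2\rangle}{2|\nabla u|^2} - \tfrac{\Delta u}{N}$. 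The bracket is the perfect square $\bigl(\sqrt{(N-1)/N}\,\gamma - \sqrt{N/(N-1)}\,\delta\bigr)^2 \geq 0$, which closes the argument at these points.

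At points where $\nabla u = 0$ the refinement (2.5) is unavailable, but every term containing $\nabla u$ vanishes from both sides and the only surviving contribution is $\tfrac{2(\Delta u)^2}{Nu_\epsilon^2}$, which the plain Bochner inequality (2.4) already supplies. The expected obstacle is not regularity (which is essentially dictated by Lemmas 2.2, 2.3) but the algebraic bookkeeping of the many competing fourth-order terms: without the extra $\tfrac{N}{N-1}(\cdot)^2$ piece of (2.5), the discrepancy $\mathrm{LHS} - \mathrm{RHS}$ retains irreducible cross-products in $\gamma$ and $\delta$, so the success of the proof hinges precisely on the dimensional refinement completing this square.
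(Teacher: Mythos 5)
Your proposal is correct and follows essentially the same route as the paper: the same decomposition of $w_\epsilon$, the same regularity bookkeeping via Lemmas 2.1--2.3 (the paper treats $\partial_t u/u_\epsilon$ with case (a) instead of $\Delta u/u_\epsilon$ with case (c), which is the same function), the identity (3.5) with $\ell=2$, the split into $|\nabla u|=0$ and $|\nabla u|\neq 0$ with (2.4) resp.\ (2.5), and the same final quadratic estimate. The only cosmetic difference is that you exhibit the discrepancy as the exact perfect square $\bigl(\sqrt{(N-1)/N}\,\gamma-\sqrt{N/(N-1)}\,\delta\bigr)^2$, whereas the paper reaches the same bound through a short chain of inequalities in $B_1,B_2$.
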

\begin{proof}
By \eqref{eq3.3} and \eqref{eq3.5} the proof in the Lemma 3.1 and taking $\ell=2$ there,
we have $|\nabla u|^2/u^2_\epsilon\in \mathscr D(\Delta^*)$ with
 \begin{equation}\label{eq4.2}
 \Delta^S(|\nabla u|^2/u^2_\epsilon)\gs0
 \end{equation}
 and,
  for $\mu$-a.e. $\ x\in X$,
\begin{equation}\label{eq4.3}
\Delta^R\Big(\frac{|\nabla u|^2}{u_\epsilon^2}\Big)- \frac{\partial}{\partial t}\Big(\frac{|\nabla u|^2}{u_\epsilon^2}\Big)= \frac{\Delta^R(|\nabla u|^2)-2\ip{\nabla u}{\nabla \Delta u}}{u_\epsilon^2}+ 6\frac{|\nabla u|^4 }{u_\epsilon^{4}}-4\frac{\la\nabla u,\nabla |\nabla u|^2\ra}{u_\epsilon^{3}}.
\end{equation}
Since $\partial_tu$ also solves the heat equation, we have $\partial_tu\in  \Lip(X)$.  From Lemma 2.1(ii)
(with condition (a)), we have $\frac{\partial_t u}{u_\epsilon}\in \mathscr D(\Delta^*)$ and
$$\Delta^*\Big(\frac{\partial_t u}{u_\epsilon}\Big)= \Delta^*(\partial_t u)\cdot u^{-1}_\epsilon+\partial_tu\big(-u^{-2}_\epsilon\cdot \Delta^*u+2u^{-3}_\epsilon|\nabla u|^2\cdot\mu\big)-2\la\nabla \partial_tu,u^{-2}_\epsilon\nabla u\ra\cdot\mu.$$
By noting $\Delta^* u=\partial_tu\cdot\mu$ and $\Delta^*(\partial_tu)=\Delta (\partial_tu)\cdot\mu$, we have
\begin{equation}\label{eq4.4}
\Delta^*\Big(\frac{\partial_t u}{u_\epsilon}\Big)-\frac{\partial}{\partial t}\Big(\frac{\partial_t u}{u_\epsilon}\Big)\cdot\mu=2\partial_t u\cdot u^{-3}_\epsilon|\nabla u|^2\cdot\mu-2 u^{-2}_\epsilon\la\nabla \partial_t u,\nabla u\ra\cdot\mu.
\end{equation}
Now $w_\epsilon\in \mathscr D(\Delta^*)\subset W^{1,2}_{\rm loc}(X)$, we have
$$\ip{\nabla w_\epsilon}{\nabla u}=u^{-2}_\epsilon\ip{\nabla |\nabla u|^2}{\nabla u}-2 u^{-3}_\epsilon|\nabla u|^4-2u^{-1}_\epsilon\ip{\nabla \partial_tu}{\nabla u}+2 u^{-2}_\epsilon \partial_tu\cdot|\nabla u|^2.$$
Therefore, by combining with (\ref{eq4.2}), (\ref{eq4.3}) and (\ref{eq4.4}), we obtain that $\Delta^Sw_\epsilon\gs0$ and, for $\mu$-a.e. $\ x\in X$,
\begin{equation}\label{eq4.5}
\begin{split}
\Delta^Rw_\epsilon-& \frac{\partial}{\partial t}w_\epsilon+2\frac{\ip{\nabla w_\epsilon}{\nabla u}}{u_\epsilon}\\
&= \frac{\Delta^R(|\nabla u|^2)-2\ip{\nabla u}{\nabla \Delta u}}{u_\epsilon^2}+ 6\frac{|\nabla u|^4 }{u_\epsilon^{4}}-4\frac{\la\nabla u,\nabla |\nabla u|^2\ra}{u_\epsilon^{3}}\\
&\quad-2\big[ 2\partial_t u\cdot u^{-3}_\epsilon|\nabla u|^2-2 u^{-2}_\epsilon\la\nabla \partial_t u,\nabla u\ra\big]\\
&\quad+2\Big[u^{-3}_\epsilon\ip{\nabla |\nabla u|^2}{\nabla u}-2 u^{-4}_\epsilon|\nabla u|^4-2u^{-2}_\epsilon\ip{\nabla \partial_tu}{\nabla u}+2 u^{-3}_\epsilon \partial_tu\cdot|\nabla u|^2\Big]\\
&=\frac{\Delta^R(|\nabla u|^2)-2\ip{\nabla u}{\nabla \Delta u}}{u_\epsilon^2}+ 2\frac{|\nabla u|^4 }{u_\epsilon^{4}}-2\frac{\la\nabla u,\nabla |\nabla u|^2\ra}{u_\epsilon^{3}}.
\end{split}
\end{equation}
At the points where $|\nabla u|=0$, using (\ref{eq2.4}) and (\ref{eq4.5}), we have
$$\Delta^Rw_\epsilon-  \frac{\partial}{\partial t}w_\epsilon+2\frac{\ip{\nabla w_\epsilon}{\nabla u}}{u_\epsilon}\gs \frac{2}{Nu^2_\epsilon}(\Delta u)^2,$$
where we have used $|\la\nabla u,\nabla |\nabla u|^2\ra|\ls |\nabla u|\cdot|\nabla|\nabla u|^2 |=0$.

At the points where $|\nabla u|\not=0$, using (\ref{eq2.5}) and (\ref{eq4.5}), we have
$$\Delta^Rw_\epsilon-  \frac{\partial}{\partial t}w_\epsilon+2\frac{\ip{\nabla w_\epsilon}{\nabla u}}{u_\epsilon}\gs \frac{2}{u^2_\epsilon}\cdot \mathscr A,$$
where
$$\mathscr A:=\frac{(\Delta u)^2}{N}+\frac{N}{N-1}\Big(\frac{\la \nabla u,\nabla |\nabla u|^2\ra}{2|\nabla u|^2}-\frac{\Delta u}{N}\Big)^2+\frac{|\nabla u|^4}{u^2_\epsilon}-\frac{\la \nabla u,\nabla |\nabla u|^2\ra}{u_\epsilon}.$$
Let us set $  B_1:=\frac{\la \nabla u,\nabla |\nabla u|^2\ra}{|\nabla u|^2}$ and $B_2:=\frac{|\nabla u|^2}{u_\epsilon}$. We get
\begin{equation}
\begin{split}
\mathscr A&=\frac{(\Delta u)^2}{N}+\frac{N}{N-1}\Big(\frac{B_1^2}{4}-\frac{B_1\cdot\Delta u}{N}+\frac{(\Delta u)^2}{N^2}\Big)+ B_2^2-B_1B_2\\
&=\frac{(\Delta u)^2}{N-1}+\frac{N}{N-1}\bigg[\frac{B_1^2}{4}-B_1\cdot\Big(\frac{\Delta u}{N}+\frac{N-1}{N}B_2\Big) \bigg]+ B_2^2\\
&\gs \frac{(\Delta u)^2}{N-1}+ B_2^2+\frac{N}{N-1}\bigg[-\Big(\frac{\Delta u}{N}+\frac{N-1}{N}B_2\Big)^2 \bigg]\\
&= \frac{(\Delta u)^2}{N-1}+ B_2^2-\Big(\frac{(\Delta u)^2}{N(N-1)}+\frac{2 B_2\cdot\Delta u}{N}+\frac{N-1}{N}B_2^2\Big)\\
&=\frac{(\Delta u-B_2)^2}{N}.
\end{split}
\end{equation}
Therefore, we have
$$\Delta^Rw_\epsilon-  \frac{\partial}{\partial t}w_\epsilon+2\frac{\ip{\nabla w_\epsilon}{\nabla u}}{u_\epsilon}\gs \frac{2}{Nu^2_\epsilon}\cdot (\Delta u-B_2)^2.$$
Together with $\Delta^Sw_\epsilon\gs0$, we have proved the lemma.
\end{proof}
 \begin{lem}\label{lem4.2}
Let $(X,d,\mu)$ be  a   $RCD^*(0, N)$ space with some $N\in[1,\infty)$.
Assume that
$u(x,t): X\times [0,T]\to\rr$, $T\ls \infty$, is  a solution of heat equation
$\partial_tu=\Delta u$ with initial value $u_0(x)\in L^2(X)\cap L^\infty(X)$ such that $u_0\gs0$.
For each $\epsilon>0$, we set
$$W_\epsilon(x,t):=\tau\cdot w_\epsilon-\log u_\epsilon-\frac{N}{2}\log(4\pi\tau)-N,$$
where $\tau=\tau(t)>0$ with $\frac{d\tau}{dt}=1$ and $u_\epsilon,w_\epsilon$ are given in Lemma \ref{lem4.1}. Then
$W_\epsilon\in\mathscr D(\Delta^*)$ and
\begin{equation}
\Delta^*W_\epsilon- \frac{\partial}{\partial t}W_\epsilon\cdot\mu\gs-2\frac{\ip{\nabla W_\epsilon}{\nabla u}}{u_\epsilon}\cdot\mu.
\end{equation}
\end{lem}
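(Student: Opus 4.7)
\medskip

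\noindent\textbf{Proof proposal.}
The strategy is to break $W_\epsilon$ into its three constituents, apply the calculus rules from Section~2 to each piece, and then rearrange using the pointwise inequality supplied by Lemma~\ref{lem4.1}. The tricky part will not be the regularity bookkeeping but rather recognizing that the remainder terms, after the gradient of $W_\epsilon$ is expressed in terms of $\nabla w_\epsilon$ and $\nabla u$, collapse to a perfect square involving the expression $\Delta u - |\nabla u|^2/u_\epsilon$ that already appears in \eqref{eq4.1}.

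First I would show $W_\epsilon\in\mathscr D(\Delta^*)$. Lemma~\ref{lem4.1} gives $w_\epsilon\in\mathscr D(\Delta^*)$; since $u_\epsilon\ge\epsilon>0$ and $u\in\Lip(X)\cap\mathscr D(\Delta_{\mathscr E})$ for each $t>0$ (as observed at the start of the proof of Lemma~\ref{lem3.1}), the Chain Rule of Lemma~\ref{lem2.1}(i) applied to $\phi(s)=\log(s+\epsilon)$ gives $\log u_\epsilon\in\mathscr D(\Delta^*)$ with
\[
\Delta^*(\log u_\epsilon)=\Big(\frac{\Delta u}{u_\epsilon}-\frac{|\nabla u|^2}{u_\epsilon^2}\Big)\cdot\mu.
\]
The remaining summand $-\frac{N}{2}\log(4\pi\tau)-N$ depends only on $t$, so $\Delta^*$ kills it and linearity of $\Delta^*$ yields $W_\epsilon\in\mathscr D(\Delta^*)$.

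Next I would compute $\Delta^*W_\epsilon-\partial_tW_\epsilon\cdot\mu$ termwise. Using $\partial_t u=\Delta u$ and $d\tau/dt=1$, a direct calculation gives
\[
\Delta^*(\log u_\epsilon)-\partial_t(\log u_\epsilon)\cdot\mu=-\frac{|\nabla u|^2}{u_\epsilon^2}\cdot\mu,\qquad \partial_t\Big(-\tfrac{N}{2}\log(4\pi\tau)-N\Big)=-\frac{N}{2\tau},
\]
and $\partial_t(\tau w_\epsilon)=w_\epsilon+\tau\partial_tw_\epsilon$. Invoking \eqref{eq4.1} to bound $\tau(\Delta^*w_\epsilon-\partial_tw_\epsilon\cdot\mu)$ from below and substituting $w_\epsilon=\frac{|\nabla u|^2}{u_\epsilon^2}-\frac{2\Delta u}{u_\epsilon}$, one obtains
\[
\Delta^*W_\epsilon-\partial_tW_\epsilon\cdot\mu\ge \Big[-\tfrac{2\tau}{u_\epsilon}\ip{\nabla w_\epsilon}{\nabla u}+\tfrac{2\tau}{Nu_\epsilon^2}\big(\Delta u-\tfrac{|\nabla u|^2}{u_\epsilon}\big)^2+\tfrac{2\Delta u}{u_\epsilon}+\tfrac{N}{2\tau}\Big]\cdot\mu.
\]

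Finally I would match the target right-hand side. Since $\nabla W_\epsilon=\tau\nabla w_\epsilon-\nabla u/u_\epsilon$, one has
\[
-\tfrac{2}{u_\epsilon}\ip{\nabla W_\epsilon}{\nabla u}=-\tfrac{2\tau}{u_\epsilon}\ip{\nabla w_\epsilon}{\nabla u}+\tfrac{2|\nabla u|^2}{u_\epsilon^2},
\]
so the claim reduces, after cancellation, to verifying
\[
\frac{2\tau}{N}X^2+2X+\frac{N}{2\tau}\ge 0\quad\text{with }X:=\frac{1}{u_\epsilon}\Big(\Delta u-\frac{|\nabla u|^2}{u_\epsilon}\Big).
\]
This is the perfect square $\frac{2\tau}{N}\bigl(X+\frac{N}{2\tau}\bigr)^2\ge 0$, which is the main (and essentially only) non-routine step. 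I expect the chief obstacle to be verifying rigorously that the chain rule applies despite $u$ being only Lipschitz (not $C^{1,1}$); Lemma~\ref{lem2.1}(i) is tailored to handle exactly this, since $\log(\cdot+\epsilon)$ is $C^{1,1}_{\rm loc}$ on $[0,\infty)$ and $u\in\mathscr D(\Delta)\cap C(X)$ for each $t>0$.
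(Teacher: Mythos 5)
Your proposal is correct and follows essentially the same route as the paper: expand $\Delta^*W_\epsilon-\partial_tW_\epsilon\cdot\mu$ termwise (chain rule for $\log u_\epsilon$, $\partial_t(\tau w_\epsilon)=w_\epsilon+\tau\partial_t w_\epsilon$), insert the inequality of Lemma \ref{lem4.1}, rewrite the gradient term via $\nabla W_\epsilon=\tau\nabla w_\epsilon-\nabla u/u_\epsilon$, and finish with the perfect square $\frac{2\tau}{N}X^2+2X+\frac{N}{2\tau}=\frac{2\tau}{N}\bigl(X+\frac{N}{2\tau}\bigr)^2\gs 0$ with $X=\frac{\Delta u}{u_\epsilon}-\frac{|\nabla u|^2}{u_\epsilon^2}$. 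Working directly with the measure-valued inequality \eqref{eq4.1} (rather than splitting into $\Delta^R$ and $\Delta^S\gs0$ as the paper does) is an equivalent and equally rigorous bookkeeping.
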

\begin{proof}
From Lemma 4.1 and that $\log u_\epsilon\in \mathscr D(\Delta^*)$, we deduce that
$W_\epsilon\in\mathscr D(\Delta^*)$. Moreover, $\Delta^SW_\epsilon=\tau\cdot\Delta^Sw_\epsilon\gs0$.
By directly calculating and using Lemma \ref{lem4.1}, we get
\begin{equation*}
\begin{split}
\Big(\Delta^R-\frac{\partial}{\partial t}\Big)W_\epsilon&=\tau\cdot\Big(\Delta^R-\frac{\partial}{\partial t}\Big)w_\epsilon-w_\epsilon+\frac{|\nabla u|^2}{u^2_\epsilon}+\frac{N}{2\tau}\\
&\gs -2\frac{\ip{\nabla(\tau\cdot w_\epsilon)}{\nabla u}}{u_\epsilon}+\frac{2\cdot\tau}{Nu^2_\epsilon}\Big(\Delta u-\frac{|\nabla u|^2}{u_\epsilon}\Big)^2-\frac{|\nabla u|^2}{u^2_\epsilon}+2\frac{\Delta u}{u_\epsilon}+\frac{|\nabla u|^2}{u^2_\epsilon}+\frac{N}{2\tau}\\
&= -2\frac{\ip{\nabla(W_\epsilon+\log u_\epsilon)}{\nabla u}}{u_\epsilon}+\frac{2\cdot\tau}{N}\Big(\frac{\Delta u}{u_\epsilon}-\frac{|\nabla u|^2}{u^2_\epsilon}\Big)^2 +2\frac{\Delta u}{u_\epsilon}+\frac{N}{2\tau}\\
&= -2\frac{\ip{\nabla W_\epsilon}{\nabla u}}{u_\epsilon}+\frac{2\cdot\tau}{N}\Big(\frac{\Delta u}{u_\epsilon}-\frac{|\nabla u|^2}{u^2_\epsilon}\Big)^2 +2\Big(\frac{\Delta u}{u_\epsilon}-\frac{|\nabla u|^2}{u^2_\epsilon}\Big)+\frac{N}{2\tau}\\
&\gs -2\frac{\ip{\nabla W_\epsilon}{\nabla u}}{u_\epsilon}.
\end{split}
\end{equation*}
Together with $\Delta^SW_\epsilon\gs0$, we have proved the lemma.
\end{proof}

Now let us prove Theorem \ref{thm1.3}.

\begin{proof}[Proof of Theorem \ref{thm1.3}] $ \ $\\
(i) The case $N=\infty$. The assumption that $X$ is compact and  $u_0\in L^\infty(X)$ implies that $\mu(X)<\infty$ and $u_0\in L^2(X)$.
Fix an $\epsilon>0$ and set $u_\epsilon=u+\epsilon$. By using Lemma \ref{lem4.1}, we have
\begin{equation*}
\begin{split}
\Delta^*(w_\epsilon u_\epsilon)- \frac{\partial}{\partial t}(w_\epsilon u_\epsilon)\cdot\mu&= u_\epsilon\big(\Delta^*w_\epsilon - \frac{\partial}{\partial t}w_\epsilon \cdot\mu\big)
+2\ip{\nabla w_\epsilon}{\nabla u}\cdot\mu\\
&\gs-2\ip{\nabla w_\epsilon}{\nabla u}\cdot\mu+2\ip{\nabla w_\epsilon}{\nabla u}\cdot\mu=0.
\end{split}
\end{equation*}
Moreover, notice that since $\mu(X)<\infty$, we have that
$|\nabla u|^2\in W^{1,2}\cap L^\infty(X)$ and $u^{-1}_\epsilon\in {\rm Lip}(X)$, and $1\in W^{1,2}(X)$
is an admissible test function for the measure $\Delta^*(|\nabla u|^2/u_\epsilon)$ and $\Delta u$.
From these, we conclude that $\int_X\Delta u=0$ and $\int_X\Delta^*(w_\epsilon u_\epsilon)d\mu=0$, and therefore
$$\int_Xw_\epsilon u_\epsilon d\mu=\int_X(|\nabla u|^2/u_\epsilon-2\Delta u)d\mu= \int_X |\nabla u|^2/u_\epsilon d\mu$$
is monotone decreasing, since $\int_X\frac{\partial}{\partial t}(w_\epsilon u_\epsilon)\,d\mu\le
\int_X\Delta^*(w_\epsilon u_\epsilon)\,d\mu=0$.

{Note that $u(\cdot,t)$ is positive and Lipschitz continuous on $X$
for any $t>0$ and $X$ is compact. Then $f=-\log u$ is also Lipschitz
continuous on $X$ for any $t>0$.} Therefore,
$$\lim_{\epsilon\to0^+}\int_Xw_\epsilon u_\epsilon d\mu=\lim_{\epsilon\to0^+}\int_X|\nabla f|^2\cdot\frac{u^2}{u_\epsilon}d\mu=\int_X|\nabla f|^2ud\mu=\mathcal W_\infty(f,t)$$
 for any $t>0.$ Let $\epsilon\to0^+$, we conclude that $\mathcal W_\infty(f,t)$ is monotone decreasing.

(ii) The case of $N<\infty$. Since $\mu(X)<\infty$, we have $u_0\in L^2(X)$. Fix any $\epsilon>0$. Using Lemma \ref{lem4.2}, we have
\begin{equation*}
\begin{split}
\Delta^*(W_\epsilon u_\epsilon)- \frac{\partial}{\partial t}(W_\epsilon u_\epsilon)\cdot\mu&= u_\epsilon\big(\Delta^*W_\epsilon - \frac{\partial}{\partial t}W_\epsilon \cdot\mu\big)
+2\ip{\nabla W_\epsilon}{\nabla u}\cdot\mu\\
&\gs-2\ip{\nabla W_\epsilon}{\nabla u}\cdot\mu+2\ip{\nabla W_\epsilon}{\nabla u}\cdot\mu=0.
\end{split}
\end{equation*}
This and the same argument as in (i) imply that $\int_XW_\epsilon u_\epsilon d\mu$  is monotone decreasing.
By the definition of $W_\epsilon$, we have
\begin{eqnarray*}
\int_XW_\epsilon u_\epsilon d\mu&&=\int_X\tau\big(\frac{|\nabla u|^2}{u_\epsilon}-\partial_t u\big)
-\Big[\log\big(u_\epsilon\cdot(2\pi\tau)^{N/2}\big)+N\Big]u_\epsilon d\mu\\
&&=\int_X\tau\frac{|\nabla u|^2}{u_\epsilon}-\Big[\log\big(u_\epsilon\cdot(2\pi\tau)^{N/2}\big)+N\Big]u_\epsilon d\mu
\end{eqnarray*}
The same argument in (i) implies that $f=-\log \big(u(4\pi\tau)^{N/2}\big)$ is Lipschitz continuous on $X$ for any $t>0$.
Therefore,
$$\lim_{\epsilon\to0^+}\int_XW_\epsilon u_\epsilon d\mu=\int_X\Big(\tau|\nabla f|^2+f-N\big)ud\mu=\mathcal W_N(f,\tau)$$
 for any $t>0.$ Let $\epsilon\to0^+$, we conclude that $\mathcal W_N(f,t)$ is monotone decreasing. Now we complete the proof.
\end{proof}

\noindent Renjin Jiang

\vspace{0.1cm}
\noindent
School of Mathematical Sciences, Beijing Normal University,
Laboratory of Mathematics and Complex Systems, Ministry of Education, 100875, Beijing, China

\vspace{0.2cm}
\noindent{\it E-mail address}: \texttt{rejiang@bnu.edu.cn}

\vspace{0.2cm}
\noindent Huichun Zhang

\vspace{0.1cm}
\noindent Department of Mathematics, Sun Yat-sen University, Guangzhou 510275, China

\vspace{0.2cm}
\noindent{\it E-mail address}: \texttt{zhanghc3@mail.sysu.edu.cn}

\end{document}